\let\pa\partial
\let\na\nabla
\let\eps\varepsilon
\newcommand{\N}{{\mathbb N}}
\newcommand{\R}{{\mathbb R}}
\newcommand{\diver}{\operatorname{div}}
\newcommand{\red}{\textcolor{black}}
\newtheorem{theorem}{Theorem}
\newtheorem{lemma}[theorem]{Lemma}
\newtheorem{proposition}[theorem]{Proposition}
\newtheorem{remark}[theorem]{Remark}
\begin{document}

\title[Maxwell--Stefan--Fourier systems]{Analysis of Maxwell--Stefan systems 
for heat conducting fluid mixtures}

\author[C. Helmer]{Christoph Helmer}
\address{Institute for Analysis and Scientific Computing, Vienna University of
	Technology, Wiedner Hauptstra\ss e 8--10, 1040 Wien, Austria}
\email{christoph.helmer@tuwien.ac.at}

\author[A. J\"ungel]{Ansgar J\"ungel}
\address{Institute for Analysis and Scientific Computing, Vienna University of
	Technology, Wiedner Hauptstra\ss e 8--10, 1040 Wien, Austria}
\email{juengel@tuwien.ac.at}

\date{\today}

\thanks{\red{The authors thank the anonymous reviewers for helping us to
improve significantly the initial version of the paper.}
The authors have been partially supported by the Austrian Science Fund (FWF), 
grants P30000, P33010, F65, and W1245.}

\begin{abstract}
The global-in-time existence of bounded 
weak solutions to the Maxwell--Stefan--Fourier
equations in Fick--Onsager form is proved. The model consists of the mass 
balance equations for the partial mass densities and and the energy balance
equation for the total energy. The diffusion and heat fluxes 
depend linearly on the gradients of the thermo-chemical potentials and the gradient
of the temperature and include the Soret and Dufour effects. 
The cross-diffusion system exhibits an entropy structure,
which originates from the thermodynamic modeling. The lack of positive
definiteness of the diffusion matrix is compensated by the fact that the
total mass density is constant in time. The entropy estimate yields the 
a.e.\ positivity of the partial mass densities and temperature.
\red{Also diffusion matrices are considered that degenerate for vanishing partial 
mass densities.}
\end{abstract}

\keywords{Fick--Onsager cross-diffusion equations, Maxwell--Stefan systems, 
fluid mixtures, existence of solutions, positivity.}

\subjclass[2000]{35K51, 35K55, 82B35.}

\maketitle


\section{Introduction}

Maxwell--Stefan equations describe the dynamics of multicomponent fluids
by accounting for the gradients of the chemical potentials as driving forces. 
The global existence analysis is usually
based on the so-called entropy or formal gradient-flow structure. 
Up to our knowledge, almost all existence
results are concerned with the isothermal setting.
Exceptions are the local-in-time existence result of \cite{HuSa18} and
the coupled Maxwell--Stefan and compressible Navier--Stokes--Fourier systems
analyzed in \cite{GPZ15,PiPo17}, where no temperature gradients in the
diffusion fluxes (Soret effect) have been taken into account. 
In this paper, we suggest and analyze for the first time Maxwell--Stefan--Fourier
systems in Fick--Onsager form, including Soret and Dufour effects. 

\subsection{Model equations}

We consider the evolution of the partial mass densities $\rho_i(x,t)$ and
temperature $\theta(x,t)$ in a fluid mixture, governed by the equations
\begin{align}
  & \pa_t\rho_i + \diver J_i = r_i, \quad
	J_i = -\sum_{j=1}^n M_{ij}(\bm\rho,\theta)\na q_j 
	- M_i(\bm\rho,\theta)\na\frac{1}{\theta}, \label{1.eq1} \\
  & \pa_t(\rho\theta) + \diver J_e = 0, \quad
	J_e = -\kappa(\theta)\na\theta - \sum_{j=1}^n M_j(\bm\rho,\theta)\na q_j
	\quad \mbox{in }\Omega,\ i=1,\ldots,n, \label{1.eq2}
\end{align}
where $\Omega\subset\R^3$ is a bounded domain, $\bm\rho=(\rho_1,\ldots,\rho_n)$
is the vector of mass densities, \red{$\rho=\sum_{i=1}^n\rho_i$ is the total
mass density}, and $q_i=\log(\rho_i/\theta)$ is the 
thermo-chemical potential of the $i$th species. 
The diffusion fluxes are denoted
by $J_i$, the reaction rates by $r_i$, the energy flux by $J_e$, and
the heat conductivity by $\kappa(\theta)$. 
The functions $M_{ij}$ are the diffusion coefficients, and the
terms $M_i\na(1/\theta)$ and $\sum_{j=1}^n M_j\na q_j$ describe the Soret and
Dufour effect, respectively. 

We prescribe the boundary and initial conditions
\begin{align}
  & J_i\cdot\nu=0, \quad J_e\cdot\nu + \lambda(\theta_0-\theta)=0
	\quad\mbox{on }\pa\Omega,\ t>0, \label{1.bc} \\
  & \rho_i(\cdot,0)=\rho_i^0,\quad (\rho_i\theta)(\cdot,0)=\rho_i^0\theta^0
	\quad\mbox{in }\Omega,\ i=1,\ldots,n, \label{1.ic}
\end{align}
where $\nu$ is the exterior unit normal vector to $\pa\Omega$, $\theta_0>0$ is 
the constant background temperature, and $\lambda\ge 0$ is a relaxation parameter.
Equations \eqref{1.bc} mean that the fluid cannot leave the domain $\Omega$,
while heat transfer through the boundary is possible (if $\lambda\neq 0$).

In Maxwell--Stefan systems, the driving forces $d_i$
are usually given by linear combinations of the diffusion fluxes
\cite[Sec.~14]{BoDr15}:
\begin{equation}\label{1.MSE}
  \pa_t\rho_i + \diver J_i = r_i, \quad
	d_i = -\sum_{j=1}^n b_{ij}\rho_i\rho_j\bigg(
	\frac{J_i}{\rho_i}-\frac{J_j}{\rho_j}\bigg), \quad i=1,\ldots,n,
\end{equation}
where $b_{ij}=b_{ji}\ge 0$ for $i,j=1,\ldots,n$.
\red{It is shown in \cite{BoDr20} that the Fick--Onsager and Maxwell--Stefan
formulations are equivalent, at least in the isothermal case. We show in 
Section \ref{sec.model} that \eqref{1.MSE} can be written as \eqref{1.eq1}
for a special choice of $d_i$, $M_{ij}$, and $M_i$ in the non-isothermal situation.}

We say that the diffusion fluxes in \eqref{1.eq1} are in {\em Fick--Onsager form}.
As the heat flux is given by
Fourier's law, we call system \eqref{1.eq1}--\eqref{1.eq2} the
{\em Maxwell--Stefan--Fourier} equations in Fick--Onsager form.
We refer to Section \ref{sec.model} for details of the modeling.

To fulfill mass conservation, the sum of the diffusion fluxes and the sum of
the reaction terms should vanish, i.e.\ $\sum_{i=1}^n J_i=0$ and
$\sum_{i=1}^n r_i=0$ (see Section \ref{sec.model}). 
Then, summing \eqref{1.eq1} over $i=1,\ldots,n$, we see that
the total mass density $\rho(\cdot,t)=\sum_{i=1}^n\rho_i(\cdot,t)=\rho^0$ 
is constant in time (but generally not in space).
Another consequence of the identity $\sum_{i=1}^n J_i=0$ 
is that the diffusion matrix has a nontrivial kernel, and we assume that
\begin{equation}\label{1.M1}
  \sum_{i=1}^n M_{ij}=0 \quad\mbox{for }j=1,\ldots,n, \quad \sum_{i=1}^n M_i=0.
\end{equation}
\red{For our first existence result}, 
we suppose that the matrix $(M_{ij})$ is symmetric and 
positive semidefinite in the sense that there exists $c_M>0$ such that
\begin{equation}\label{1.M2}
  \sum_{i,j=1}^n M_{ij}(\bm\rho,\theta)z_iz_j \ge c_M|\Pi \bm{z}|^2 
	\quad\mbox{for }\bm{z}\in\R^n,\ \bm\rho\in\R_+^n,\ \theta\in\R_+,
\end{equation}
where $\Pi=I-\bm{1}\otimes\bm{1}/n$ is the orthogonal projection on
$\operatorname{span}\{\bm{1}\}^\perp$. \red{This condition holds for non-dilute
fluids; we refer to Section \ref{sec.result} for a weaker condition.}

\subsection*{Notation}

We write $\bm{z}$ for a vector of $\R^n$ with components $z_1,\ldots,z_n$ and 
$\bm{z}'$ for a vector of $\R^{n-1}$ with components $z_1,\ldots,z_{n-1}$. 
In particular, $\bm{1}=(1,\ldots,1)\in\R^n$.
Furthermore, we set $\R_+=[0,\infty)$ and $\Omega_T=\Omega\times(0,T)$.

\subsection{Mathematical ideas}

The mathematical difficulties of system \eqref{1.eq1}--\eqref{1.eq2} are
the cross-diffusion structure, the lack of coerciveness of the diffusion
operator, and the temperature terms. In particular, it is not trivial
to verify the positivity of the temperature.
These difficulties are overcome by exploiting the entropy structure of the equations.
We describe the main ideas for the first existence result. 
More precisely, we use the mathematical entropy
$$
  h = \sum_{i=1}^n\rho_i(\log\rho_i-1) - \rho\log\theta.
$$
\red{Introducing the relative thermo-chemical potentials 
$v_i = \pa h/\pa\rho_i - \pa h/\pa\rho_n = q_i - q_n$ for $i = 1,\ldots,n$
and interpreting $h$ as a function of $(\bm{\rho}',\theta)$,}
a formal computation (which is made precise for an approximate scheme; see
\eqref{2.ei}) shows that
\begin{align}
  \frac{d}{dt}\int_\Omega h(\bm\rho',\theta)dx
	&+ \frac{c_M}{2}\int_\Omega\bigg(\frac{1}{n}|\na\bm v|^2 
	+ |\na\Pi\bm{q}|^2\bigg)dx \nonumber\\
	&{}+ \int_\Omega\kappa(\theta)
	|\na\log\theta|^2 dx + \lambda\int_{\pa\Omega}
	\bigg(\frac{\theta_0}{\theta}-1\bigg)ds 
	\le \sum_{i=1}^{n-1}\int_\Omega r_iv_i dx. \label{1.ei}
\end{align} 
\red{The bound for $\na\bm{v}$ comes from the positive definiteness of the
{\em reduced} diffusion matrix $(M_{ij})_{i,j=1}^{n-1}$; see Lemma \ref{lem.M}.}
Under suitable conditions on the heat conductivity and the reaction rates, 
this so-called entropy inequality provides gradient estimates for $\bm v$,
$\log\theta$, $\theta$, and $\Pi\bm{q}$, but not for the full vector $\bm{q}$. 
This problem was overcome in \cite{BJPZ20}
for a more general (but stationary) multicomponent Navier--Stokes--Fourier system
by using tools from mathematical fluid dynamics 
(effective viscous flux identity and Feireisl's oscillations defect measure).
In our model, the situation is much simpler. 
\red{Indeed, the relation $v_i=\log\rho_i-\log\rho_n$ can be inverted yielding
\begin{equation}\label{1.rhov}
  \rho_i = \frac{\rho^0\exp(v_i)}{\sum_{j=1}^n \exp(v_j)}, \quad i=1,\ldots,n-1, \quad
	\rho_n = \rho^0 - \sum_{j=1}^{n-1}\rho_j,
\end{equation}
which suggests to work with the {\em reduced} vector 
$\bm{\rho}'=(\rho_1,\ldots,\rho_{n-1})$. Moreover, 
this shows that $\rho_i$ stays bounded in some interval $(0,\rho^*)$ and, in view
of the bound for $\na \bm{v}$, that
$\na\bm\rho$ is bounded in $L^2(\Omega)$.} Together with a bound for the
(discrete) time derivative of $\rho_i$, we deduce the strong convergence
of $\rho_i$ from the Aubin--Lions compactness lemma.

Still, there remains a difficulty. 
The estimate for $\kappa(\theta)^{1/2}\na\log\theta$ in $L^2(\Omega)$ from \eqref{1.ei}
is not sufficient to define $\kappa(\theta)\na\theta$ in the weak formulation.
In the Navier--Stokes--Fourier equations, this difficulty is handled by
replacing the local energy balance by the local entropy inequality and the
global energy balance \cite{FeNo09}. We choose another approach. 
The idea is to derive better estimates for the temperature by using $\theta$
as a test function in the weak formulation of \eqref{1.eq2}.
If $\kappa(\theta)\ge c_\kappa\theta^2$ for some $c_\kappa>0$ and
$M_j/\theta$ is assumed to be bounded, then a formal computation, which is
made precise in Lemma \ref{lem.theta}, gives
\begin{align}
  \frac12\frac{d}{dt}&\int_\Omega\rho^0\theta^2 dx
	+ c_\kappa\int_\Omega\theta^2|\na\theta|^2 dx
	- \lambda\int_{\pa\Omega}(\theta_0-\theta)\theta ds \label{1.est2} \\
	&= \sum_{j=1}^{n-1} \int_\Omega \frac{M_j}{\theta}\theta\na v_j\cdot\na\theta dx
	\le \frac{c_\kappa}{2}\int_\Omega\theta^2|\na\theta|^2 dx
	+ C\sum_{j=1}^{n-1}\int_\Omega|\na v_j|^2 dx. \nonumber
\end{align}
Since $\na v_j$ is bounded in $L^2$, 
this yields uniform bounds for $\theta^2$
in $L^\infty(0,T;L^1(\Omega))$ and $L^2(0,T;H^1(\Omega))$.
These estimates are sufficient to treat the term $\kappa(\theta)\na\theta$.
The delicate point is to choose the approximate scheme in such a way that
estimates \eqref{1.ei} and \eqref{1.est2} can be made rigorous; we refer
to Section \ref{sec.ex} for details.

\subsection{State of the art}

Before we state our main result, 
we review the state of the art of Maxwell--Stefan and related models.
The isothermal equations were derived from the multi-species Boltzmann equations
in the diffusive approximation in \cite{BoBr19,BGPS13}. The Fick--Onsager form of the
Maxwell--Stefan equations was rigorously derived in Sobolev spaces
from the multi-species Boltzmann system in \cite{BrGr20}. 
The Maxwell--Stefan equations in the Fick--Onsager form, 
coupled with the momentum balance equation, 
can be identified as a rigorous second-order Chapman--Enskog approximation
of the Euler (--Korteweg) equations for multicomponent fluids; see
\cite{HJT19} for the Euler--Korteweg case and \cite{OsRo20} for the Euler case.
The work \cite{BGP19} is concerned with the
friction limit in the isothermal Euler equations using the hyperbolic
formalism developed by Chen, Levermore, and Liu.
A formal Chapman--Enskog expansion of the stationary non-isothermal model was 
presented in \cite{TaAo99}. 
Another non-isothermal Maxwell--Stefan system was derived in \cite{ABSS20}, but the
energy flux is different from the expression in \eqref{1.eq2}.

The existence analysis of (isothermal) Maxwell--Stefan equations
started with the paper \cite{GiMa98}, where the existence
of global-in-time weak solutions near the constant equilibrium was proved. 
A proof of local-in-time classical solutions
to Maxwell--Stefan systems was given in \cite{Bot11}, \red{and regularity
and instantaneous positivity for the Maxwell--Stefan system were shown in
\cite{HMPW17}}. 
In \cite{JuSt13}, the entropy or formal gradient-flow structure was revealed,
which allowed for the proof of global-in-time weak solutions with general
initial data. Maxwell--Stefan systems, coupled to the Poisson equation for 
the electric potential, were analyzed in \cite{JuLe19}. 

\red{Alt and Luckhaus \cite{AlLu83} proved a global existence result for 
parabolic systems related to the Fick--Onsager formulation. However, their
result cannot be directly applied to system \eqref{1.eq1} because of the 
lack of coerciveness. Moreover, this theory does not yield $L^\infty$ bounds. 
They are obtained from the technique of \cite{Jue15}, but the treatment of 
Soret and Dufour terms requires some care and is not contained in that work.}

All the mentioned results hold if the barycentric velocity vanishes.
For non-vanishing fluid velocities, the Maxwell--Stefan equations need to be
coupled to the momentum balance.
The Maxwell--Stefan equations were coupled to the incompressible Navier--Stokes
equations in \cite{ChJu15}, and the global existence of weak solutions was shown.
A similar result can be found in \cite{DoDo19}, where the incompressibility
condition was replaced by an artificial time derivative of the pressure and 
the limit of vanishing approximation parameters was performed. Coupled
Maxwell--Stefan and compressible Navier--Stokes equations were analyzed in
\cite{BoDr20}, and the local-in-time existence analysis was performed.
A global existence analysis for a general isothermal 
Maxwell--Stefan--Navier--Stokes system was performed in \cite{DDGG20}.
For the existence analysis of coupled stationary Maxwell--Stefan and compressible
Navier--Stokes--Fourier systems, we refer to \cite{BJPZ20,GPZ15,PiPo17}.
In \cite{BJPZ20}, temperature gradients were included in the partial mass
fluxes, but only the stationary model was investigated. The global-in-time
existence of weak solutions to the transient Maxwell--Stefan--Fourier equations
is missing in the literature and proved in this paper for the first time.

\subsection{Main results}\label{sec.result}

We impose the following assumptions:
\begin{labeling}{(A44)}
\item[(H1)] {\em Domain:} $\Omega\subset\R^3$ is a bounded domain with
a Lipschitz continuous boundary.
\item[(H2)] {\em Data:} $\theta^0\in L^\infty(\Omega)$, 
$\inf_\Omega\theta^0>0$, $\theta_0>0$,
$\lambda\ge 0$; $\rho_i^0\in \red{H^1(\Omega)} \cap L^\infty(\Omega)$ satisfies 
$0<\rho_*\le\rho_i^0\le\rho^*$ in $\Omega$ for some $\rho_*$, $\rho^*>0$.
\item[(H3)] {\em Diffusion coefficients:} For $i,j=1,\ldots,n$, the
coefficients $M_{ij}$, $M_j\in C^0(\R_+^n\times\R_+)$ satisfy
\eqref{1.M1} and $M_{ij}$, $M_i/\theta$ are bounded functions.
\item[(H4)] {\em Heat conductivity:} $\kappa \in C^0(\R_+)$ and there exist 
$c_\kappa$, $C_\kappa>0$ 
such that for all $\theta\ge 0$,
$$
  c_\kappa(1+\theta^2) \le \kappa(\theta)\le C_\kappa(1+\theta^2).
$$
\item[(H5)] {\em Reaction rates:} $r_1,\ldots,r_n\in C^0(\R^n\times\R_+)
\cap L^\infty(\R^n\times\R_+)$ satisfies
$\sum_{i=1}^n r_i=0$ and there exists $c_r>0$ such that for all $\bm{q}\in\R^n$
and $\theta>0$,
$$
  \sum_{i=1}^n r_i(\Pi\bm{q},\theta)q_i \le -c_r|\Pi\bm{q}|^2.
$$
\end{labeling}

The bounds on $\rho^0$ in Hypothesis (H2) are needed to derive the positivity
and boundedness of the partial mass densities.
In the example presented in Section \ref{sec.model},
the coefficients $M_{ij}$ and $M_i/\theta$ depend on $\rho_i$; since we prove
the existence of $L^\infty$ solutions $\rho_i$, the functions $M_{ij}$ and $M_i$ 
are indeed bounded, as required in Hypothesis (H3). The growth condition for
the heat conductivity in Hypothesis (H4) is used to derive higher integrability
of the temperature, see \eqref{1.est2}, which allows us to treat the
heat flux term. If $\lambda=0$, we can impose the weaker condition
$\kappa(\theta)\ge c_\kappa\theta^2$.
Hypothesis (H5) is satisfied for the reaction terms used in \cite{DDGG20}.
The bound for $\sum_{i=1}^n r_iq_i$  
gives a control on the $L^2(\Omega)$ norm of $\Pi\bm{q}$. 
Together with the estimates for $\na(\Pi\bm{q})$ from \eqref{1.ei}, 
we are able to infer an $H^1(\Omega)$ estimate for $\Pi\bm{q}$.  
\red{A more natural $L^2(\Omega)$ bound for $\bm{q}$ may be derived under the
assumption that the total initial density does not lie on a critical
manifold associated to the reaction rates; we refer to \cite[Theorem 11.3]{DDGG20}
for details. Vanishing reaction rates are allowed in Theorem \ref{thm.ex2} below.}

Our first main result is as follows.

\begin{theorem}[Existence]\label{thm.ex}
Let Hypotheses (H1)--(H5) hold, let $(M_{ij})$ satisfy \eqref{1.M2}, and let $T>0$. 
Then there exists a weak solution $(\bm\rho,\theta)$
to \eqref{1.eq1}--\eqref{1.ic} satisfying $\rho_i>0$, $\theta>0$ a.e.\ in $\Omega_T$, 
\begin{align}
  & \rho_i\in L^\infty(\Omega_T)\cap L^2(0,T;H^1(\Omega))\cap
	H^1(0,T;H^2(\Omega)'), \label{1.rho} \\
	& v_i\in L^2(0,T;H^1(\Omega)), \quad
	(\Pi\bm{q})_i\in L^2(0,T;H^1(\Omega)), \label{1.v} \\
	& \theta\in L^2(0,T;H^1(\Omega))\cap
	W^{1,16/15}(0,T;W^{1,16}(\Omega)'),\quad 
	\log\theta\in L^2(0,T;H^1(\Omega)); \label{1.theta}
\end{align}
where $v_i=\log(\rho_i/\rho_n)$ and $(\Pi\bm{q})_i=v_i-\sum_{j=1}^{n}v_j/n$
for $i=1,\ldots,n$; it holds that
\begin{align}
  & \int_0^T\langle\pa_t\rho_i,\phi_i\rangle dt
	+ \int_0^T\int_\Omega\bigg(\sum_{j=1}^{n-1} M_{ij}\na v_j
	- \frac{M_i}{\theta}\na\log\theta\bigg)\cdot\na\phi_i dxdt
	= \int_0^T\int_\Omega r_i\phi_i dxdt, \label{1.weak1} \\
	& \int_0^T\langle \pa_t(\rho\theta),\phi_0\rangle dt
	+ \int_0^T\int_\Omega \kappa(\theta)\na\theta\cdot\na\phi_0 dxdt
	+ \int_0^T\int_\Omega\sum_{j=1}^{n-1} M_j\na v_j\cdot\na\phi_0 dxdt \label{1.weak2} \\
	&\phantom{xx}{}
	= \lambda\int_0^T\int_{\pa\Omega}(\theta_0-\theta)\phi_0 dxds \nonumber
\end{align}
for all $\phi_1,\ldots,\phi_n\in L^2(0,T;H^1(\Omega))$, 
$\phi_0\in L^\infty(0,T;W^{1,\infty}(\Omega))$ with $\na\phi_0\cdot\nu=0$
on $\pa\Omega$, and $i=1,\ldots,n$; and the initial conditions
\eqref{1.ic} are satisfied in the sense of $H^2(\Omega)'$ and $W^{1,16}(\Omega)'$,
respectively.
\end{theorem}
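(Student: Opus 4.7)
The strategy is to construct approximate solutions via an implicit time-discretization combined with a Galerkin-type regularization in entropy variables, derive two independent families of a priori estimates, and pass to the de-regularization limit by compactness.

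First, I would parametrize the admissible states by entropy variables: introduce $\bm{v}'=(v_1,\ldots,v_{n-1})$ and $w=\log\theta$; by \eqref{1.rhov}, the map $\bm{v}'\mapsto\bm\rho$ takes values in the open simplex $\{\rho_i>0,\ \sum_i\rho_i=\rho^0\}$, so that $0<\rho_i<\rho^0$ and $\theta=e^w>0$ are built into the ansatz. Discretizing time with step $\tau=T/N$ via implicit Euler and adding a small $H^2$-coercive regularization of the form $\eps(\|\bm{v}'\|_{H^2}^2+\|w\|_{H^2}^2)$ produces, at each time step, a nonlinear elliptic system for $(\bm{v}', w)$. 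Existence at each step follows from a topological fixed-point argument in the spirit of \cite{Jue15}, with the regularization combined with Lemma \ref{lem.M} (positive definiteness of the reduced diffusion matrix) supplying the required coerciveness.

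Next I would derive, at the discrete level, both key a priori bounds. Testing the discretized $\rho_i$-equation with $v_i^k$ (summed over $i=1,\ldots,n-1$) and the energy equation with $-1/\theta^k$, convexity of $h$ in $(\bm\rho',\rho\theta)$ turns the discrete time differences into a telescoping upper bound; combined with Hypothesis (H5), this yields the discrete counterpart of \eqref{1.ei} and hence uniform bounds on $\na\bm{v}$, $\na(\Pi\bm{q})$, $\Pi\bm{q}$, $\kappa(\theta)^{1/2}\na\log\theta$, and on the boundary term. A second, separate test with $\theta^k$ in the discrete energy equation produces \eqref{1.est2}---boundedness of $M_j/\theta$ from (H3) is precisely what controls the right-hand side by $\|\na\bm{v}\|_{L^2}$---and together with $\kappa(\theta)\ge c_\kappa(1+\theta^2)$ from (H4), this upgrades the temperature bound to $\theta\in L^\infty(0,T;L^2(\Omega))\cap L^2(0,T;H^1(\Omega))$ uniformly in $(\tau,\eps)$.

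For compactness I would first convert $\na\bm{v}'\in L^2(\Omega_T)$ into $\na\bm\rho'\in L^2(\Omega_T)$ via the chain rule (using that $\pa\rho_i/\pa v_j$ is a bounded smooth function of $\exp v_k$), combine with a uniform estimate of the discrete time differences of $\rho_i$ in $H^2(\Omega)'$, and invoke the Aubin--Lions lemma for piecewise-constant time-interpolants to obtain strong convergence of $\bm\rho$ in $L^2(\Omega_T)$. For the temperature, Gagliardo--Nirenberg interpolation in three space dimensions upgrades the bounds above to enough integrability for $\theta^2\na\theta$ (and hence $\kappa(\theta)\na\theta$) to lie in some $L^p(\Omega_T)$ with $p>1$, matching the integrability exponent in \eqref{1.theta} and allowing the energy equation to be passed to the limit in a dual sense. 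Identification of the nonlinear terms $M_{ij}(\bm\rho,\theta)\na v_j$ and $\kappa(\theta)\na\theta$ in the limit $(\tau,\eps)\to 0$ then follows from the strong convergences, and the initial conditions are recovered in the claimed dual spaces by standard arguments.

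The principal obstacle is designing the regularization so that the two distinct test-function choices---$v_i$ and $-1/\theta$ for the entropy inequality, and $\theta$ for the temperature bound---are \emph{simultaneously} admissible at the approximate level while the scheme retains enough compactness to pass to the limit. In particular, strict positivity of $\rho_i$ and $\theta$ must be preserved by the scheme, and the $\eps$-regularization must be arranged so that it vanishes in each estimate in the limit without destroying either bound. Once both estimates survive the de-regularization limit, the remainder of the proof is routine.
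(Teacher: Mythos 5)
Your plan follows essentially the same route as the paper: implicit Euler in time plus an $\eps$-level $H^2$ regularization in the entropy variables $(\bm{v}',w)$, a Leray--Schauder fixed point, the discrete entropy inequality obtained by testing with $v_i$ and (essentially) $-1/\theta$, the separate temperature estimate from testing with $\theta$ under (H3)--(H4), and Aubin--Lions plus interpolation to pass to the limit. The one technical point you correctly flag but leave open --- making both test-function choices compatible with the regularization --- is resolved in the paper by adding a $W^{1,4}(\Omega)$ regularization and a lower-order term in $w$ and by using the test function $e^{-w_0}-e^{-w}$ rather than $-e^{-w}$, so that every $\eps$-term has a sign or is absorbed.
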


\red{The weak formulation can be written in various variable sets since
\begin{align*}
  \sum_{j=1}^{n-1} M_{ij}\na v_j 
	&= \sum_{j=1}^n M_{ij}\na(\Pi\bm{q})_j
	= \sum_{j=1}^n M_{ij}\na q_j, \\
	\sum_{j=1}^{n-1}M_j\na v_j 
	&= \sum_{j=1}^n M_j\na(\Pi\bm{q})_j
	= \sum_{j=1}^n M_j\na q_j,
\end{align*}
whenever the corresponding variables are defined. Thus,
our definition of a weak solution is compatible with \eqref{1.eq1}--\eqref{1.eq2}.}
The proof is based on a suitable approximate scheme, uniform bounds coming
from entropy estimates, and $H^1(\Omega)$ estimates for the partial mass densities.
More precisely, we use two levels of approximations. First, we replace the time
derivative by an implicit Euler discretization to overcome issues with the
time regularity. Second, we add higher-order regularizations for the
thermo-chemical potentials and the logarithm of the temperature $w=\log\theta$
to achieve $H^2(\Omega)$ regularity for these variables. 
Since we are working in three space dimensions, we conclude $L^\infty(\Omega)$ 
solutions, which are needed to define properly $\rho_i=\exp(w+q_i)$.

A priori estimates are deduced from a discrete version of the entropy 
inequality \eqref{1.ei}. They are derived from
the weak formulation by using $v_i$ and $e^{-w_0}-e^{-w}$ as test functions,
where $w_0=\log\theta_0$. The
entropy structure is only preserved if we add additionally a $W^{1,4}(\Omega)$
regularization and some lower-order regularization in $w$. 
The properties for the heat conductivity allow us to obtain estimates 
for $\theta$ in $H^1(\Omega)$ and for $\na\log\theta$ in $L^2(\Omega)$.
Property \eqref{1.M2} provides gradient estimates for $\bm{v}$ and, in view of
\eqref{1.rhov}, also for $\bm\rho$.

\red{Condition \eqref{1.M2} provides a control on the relative
thermo-chemical potentials $v_i$, but it excludes the dilute limit, 
i.e.\ situations when the mass densities vanish.
This situation is included in the recent work \cite{Dru20}, which deals with
the isothermal case. We are able to replace 
condition \eqref{1.M2} by a degenerate one, which allows for dilute mixtures:
\begin{equation}\label{1.M3}
  \sum_{i,j=1}^n M_{ij}(\bm\rho,\theta)z_iz_j \ge c_M\sum_{i=1}^n\rho_i(\Pi\bm{z})_i^2
	\quad\mbox{for }\bm{z}\in\R^n,\ \bm{\rho}\in\R_+^n,\ \theta\in\R_+.
\end{equation}
This corresponds to ``degenerate'' diffusion coefficients $M_{ij}$; see Section
\ref{sec.model} for a motivation. Although this
hypothesis seems to complicate the problem, there are two advantages. First,
it allows us to derive a gradient bound for $\rho_i^{1/2}$, and second, it
helps us to avoid the bound from $r_i$ in Hypothesis (H5). In fact, we may
assume that $r_i=0$.
\begin{theorem}[Existence, ``degenerate'' case]\label{thm.ex2}
Let condition \eqref{1.M3} be satisfied. Moreover, let
Hypotheses (H1)--(H4) hold for $T>0$ and additionally,
$(\rho_i^0)^{1/2}\in H^1(\Omega)\cap L^\infty(\Omega)$, $M_{ij}/\rho_j$ and
$M_j/\rho_j$ are bounded, $r_i=0$ for all $i,j=1,\ldots,n$.
Then there exists a weak solution $(\bm\rho,\theta)$
to \eqref{1.eq1}--\eqref{1.ic} satisfying $\rho_i\ge 0$, $\theta>0$ a.e.\ in $\Omega_T$,
\eqref{1.rho}, \eqref{1.theta}, 
and the weak formulation \eqref{1.weak1}--\eqref{1.weak2} with, respectively,
\begin{align*}
  & \sum_{i=1}^n \frac{M_{ij}}{\rho_j}\na\rho_j, \quad
	\sum_{i=1}^n \frac{M_i}{\rho_i}\na\rho_i
	\quad\mbox{instead of}\quad
	\sum_{i=1}^{n-1}M_{ij}\na v_j, \quad \sum_{i=1}^{n-1}M_i\na v_i.
\end{align*}
\end{theorem}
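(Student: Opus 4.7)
The plan is to mirror the argument of Theorem~\ref{thm.ex} using the same two-level approximate scheme (implicit Euler in time plus a higher-order elliptic regularization in the unknowns $(\bm{v}',w)$ with $w=\log\theta$), so that at the approximate level $\rho_i = \rho^0\exp(v_i)/\sum_j\exp(v_j)\in(0,\rho^0)$ and $\theta=e^w>0$ hold automatically, and the fixed-point step of Section~\ref{sec.ex} carries over verbatim. What must be reworked are the a~priori estimates (since \eqref{1.M3} no longer controls $\nabla\bm{v}$) and the passage to the limit (since $\rho_i$ may vanish in the limit).

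Testing the mass equations formally with $\bm{q}$ and the energy equation with $-1/\theta$ and using \eqref{1.M3} in place of \eqref{1.M2}, the entropy identity reads (with no reaction term)
\begin{align*}
  \frac{d}{dt}\int_\Omega h\,dx + c_M\int_\Omega\sum_{i=1}^n\rho_i|\nabla(\Pi\bm{q})_i|^2\,dx + \int_\Omega\kappa(\theta)|\nabla\log\theta|^2\,dx + \lambda\int_{\pa\Omega}\bigg(\frac{\theta_0}{\theta}-1\bigg)ds \le 0.
\end{align*}
Since $(\Pi\bm{q})_i = \log\rho_i - \frac{1}{n}\sum_j\log\rho_j$ is independent of $\theta$, using $\rho_i^{1/2}\nabla\log\rho_i = 2\nabla\rho_i^{1/2}$ and $\sum_i\nabla\rho_i=0$ one computes the algebraic identity
$$
  \sum_{i=1}^n\rho_i|\nabla(\Pi\bm{q})_i|^2 = 4\sum_{i=1}^n|\nabla\rho_i^{1/2}|^2 + \rho\bigg|\frac{1}{n}\sum_{j=1}^n\nabla\log\rho_j\bigg|^2 \ge 4\sum_{i=1}^n|\nabla\rho_i^{1/2}|^2.
$$
Combined with $(\rho_i^0)^{1/2}\in H^1(\Omega)$ and $\rho_i\in L^\infty(\Omega_T)$, this will yield $\rho_i^{1/2}\in L^\infty(0,T;L^2(\Omega))\cap L^2(0,T;H^1(\Omega))$ and hence $\nabla\rho_i = 2\rho_i^{1/2}\nabla\rho_i^{1/2}\in L^2(\Omega_T)$. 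The temperature estimate of Lemma~\ref{lem.theta} is reproduced after rewriting the Dufour term, using $\sum_j M_j=0$, as $2\sum_j(M_j/\rho_j^{1/2})\nabla\rho_j^{1/2}\cdot\nabla\theta$, whose prefactor satisfies $(M_j/\rho_j^{1/2})^2 = (M_j/\rho_j)M_j \le C$ by the new hypothesis, so Young's inequality absorbs it.

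Compactness will follow from an $L^2(\Omega_T)$ bound on $J_i$ (each summand is a bounded function times an $L^2$ gradient), which gives $\pa_t\rho_i\in L^2(0,T;H^1(\Omega)')$, and Aubin--Lions applied to $\rho_i$ yields strong $L^2(\Omega_T)$ convergence of $\rho_i$, hence of $\rho_i^{1/2}$. The hard part, and the only essential change compared with Theorem~\ref{thm.ex}, will be identifying the weak limit of the degenerate diffusion flux: rewriting $(M_{ij}/\rho_j)\nabla\rho_j = 2(M_{ij}/\rho_j^{1/2})\nabla\rho_j^{1/2}$, the prefactor is bounded by $(M_{ij}/\rho_j)\rho_j^{1/2}\le C\rho_j^{1/2}$ and extends continuously by zero as $\rho_j\to 0$ (possible because $M_{ij}$ is continuous and $M_{ij}/\rho_j\in L^\infty$), so it converges a.e.\ by the strong convergence of $\bm\rho$, while $\nabla\rho_j^{1/2}$ converges only weakly in $L^2$; the product will pass to the limit by weak-strong convergence. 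The Soret term $(M_i/\theta)\nabla\log\theta$ is controlled by the bound on $M_i/\theta$ from (H3), the energy equation closes identically, and positivity of $\theta$ together with the initial conditions is recovered exactly as in Theorem~\ref{thm.ex}.
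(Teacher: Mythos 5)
Your overall strategy is exactly the paper's: keep the approximate scheme of Section \ref{sec.ex}, replace the estimate of $I_4$ by the degenerate lower bound from \eqref{1.M3}, convert $\sum_i\rho_i|\na(\Pi\bm q)_i|^2$ into a bound for $\na\rho_i^{1/2}$ in $L^2(\Omega_T)$, deduce an $L^2(0,T;H^1(\Omega))$ bound for $\rho_i$ via $\na\rho_i=2\rho_i^{1/2}\na\rho_i^{1/2}$ and the $L^\infty$ bound, rerun the time-translate estimate with $\sum_{j}(M_{ij}/\rho_j)\na\rho_j$ in place of $\sum_j M_{ij}\na v_j$, and pass to the limit by a weak--strong product argument (bounded prefactor converging a.e.\ times weakly convergent $L^2$ gradient). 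Your rewriting of the Dufour term in the temperature lemma in terms of $\na\rho_j^{1/2}$ is a correct and necessary adjustment that the paper only implicitly performs.

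One step as written is false, though the conclusion survives. You claim the identity
\begin{equation*}
  \sum_{i=1}^n\rho_i|\na(\Pi\bm{q})_i|^2
  = 4\sum_{i=1}^n|\na\rho_i^{1/2}|^2 + \rho\bigg|\frac1n\sum_{j=1}^n\na\log\rho_j\bigg|^2
\end{equation*}
``using $\sum_i\na\rho_i=0$.'' But $\sum_i\rho_i=\rho^0$ is constant only in time, not in space (Hypothesis (H2) allows nonconstant $\rho_i^0$), so $\sum_i\rho_i\na\log\rho_i=\na\rho^0\neq 0$ in general and a cross term $-\tfrac2n\,\na\rho^0\cdot\sum_j\na\log\rho_j$ survives. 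The correct route (the one in the paper) is to keep this cross term and absorb it into the square term by Young's inequality, which yields
\begin{equation*}
  \sum_{i=1}^n\rho_i|\na(\Pi\bm{q})_i|^2 \ge 4\sum_{i=1}^n|\na\rho_i^{1/2}|^2 - 4|\na(\rho^0)^{1/2}|^2,
\end{equation*}
with the loss term controlled since $(\rho^0)^{1/2}\in H^1(\Omega)$; this is where the extra regularity of the square roots of the initial densities enters, not merely as an initial-data bound. With that correction your estimate chain closes. Two further minor points: the uniform bound on the discrete time differences lives in $H^2(\Omega)'$, not $H^1(\Omega)'$, because the $\eps$-regularization term $\eps\int_\Omega D^2v_i:D^2\phi_i\,dx$ must be tested against $H^2$ functions (this does not affect Aubin--Lions); and in the limit only $\rho_i\ge0$ is recovered, so the identification $v_i=\log(\rho_i/\rho_n)$ is unavailable and the flux must be identified directly in the form $\sum_j(M_{ij}/\rho_j)\na\rho_j$, as you and the theorem statement both do.
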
}

The paper is organized as follows. We explain the thermodynamical modeling
of \eqref{1.eq1}--\eqref{1.eq2} in Section \ref{sec.model} and
show that the Maxwell--Stefan formulation \eqref{1.MSE} for specific $d_i$
can be written as \eqref{1.eq1} for certain coefficients $M_{ij}$ and $M_i$. 
Theorems \ref{thm.ex} and \ref{thm.ex2} are proved in Sections \ref{sec.ex}
and \ref{sec.degen}, respectively.


\section{Modeling}\label{sec.model}

We consider an ideal fluid mixture consisting of $n$ components with the
same molar \red{masses} in a fixed container $\Omega\subset\R^3$. 
The balance equations for the partial mass densities $\rho_i$ are given by
$$
  \pa_t\rho_i + \diver(\rho_i v_i) = r_i, \quad i=1,\ldots,n,
$$
where $v_i$ are the partial velocities and $r_i$ the reaction rates. 
Introducing the total mass density
$\rho=\sum_{i=1}^n\rho_i$, the barycentric velocity $v=\rho^{-1}\sum_{i=1}^n\rho_iv_i$,
and the diffusion fluxes $J_i=\rho_i(v_i-v)$, we can reformulate
the mass balances as
\begin{equation}\label{2.mass}
  \pa_t\rho_i + \diver(\rho_i v+J_i) = r_i, \quad i=1,\ldots,n.
\end{equation}
By definition, we have $\sum_{i=1}^nJ_i=0$, which means that the total
mass density satisfies $\pa_t\rho+\diver(\rho v)=0$.
We assume that the barycentric velocity vanishes, $v=0$, i.e., the barycenter of the
fluid is not moving. Consequently, the total mass density is constant in time.

The non-isothermal dynamics of the mixture is assumed to be 
given by the balance equations
$$
  \pa_t\rho_i + \diver J_i = r_i, \quad \pa_t E + \diver J_e = 0, \quad
	i=1,\ldots,n,
$$
where $J_e$ is the energy flux and $E$ the total energy. 
We suppose that the diffusion fluxes are 
proportional to the gradients of the thermo-chemical potentials $q_j$ 
and the temperature gradient (Soret effect) and that
the energy flux is linear in the temperature gradient and the
gradients of $q_j$ (Dufour effect): 
$$
  J_i = -\sum_{j=1}^n M_{ij}\na q_j - M_i\na\frac{1}{\theta}, \quad i=1,\ldots,n,
	\quad J_e = -\kappa(\theta)\na\theta - \sum_{j=1}^n M_j\na q_j.
$$
The proportionality factor $\kappa(\theta)$ 
between the heat flux and the temperature gradient is the heat (or thermal) 
conductivity. 

The thermo-chemical potentials and the total energy are determined in a 
thermodynamically consistent way from the free energy
$$
  \psi(\bm\rho,\theta) = \theta\sum_{i=1}^n \rho_i(\log\rho_i-1) 
	- \rho\theta(\log\theta-1).
$$
For simplicity, we have set the heat capacity equal to one.
The physical entropy $s$, the chemical potentials $\mu_i$, and the total energy $E$
are defined by the free energy according to
\begin{align*}
  s &= -\frac{\pa \psi}{\pa\theta} = -\sum_{i=1}^n\rho_i(\log\rho_i-1) 
	+ \rho\log\theta,\\
	\mu_i &= \red{\frac{\pa \psi}{\pa\rho_i} = \theta(\log(\rho_i/\theta)+1)}, 
	\quad i=1,\ldots,n, \\
  E &= \psi + \theta s = \rho\theta.
\end{align*}
We introduce the mathematical entropy $h:=-s$ and
the thermo-chemical potentials $q_j=\mu_j/\theta=\log(\rho_j/\theta)+1$ 
for $j=1,\ldots,n$. These definitions lead to system \eqref{1.eq1}--\eqref{1.eq2}.
The Gibbs--Duhem relation yields the pressure
\red{$p=-\psi+\sum_{i=1}^n\rho_i\mu_i=\rho\theta$} of an ideal gas mixture. Note that
we do not need a pressure blow-up at $\rho=0$ to exclude vacuum or
a superlinear growth in $\theta$ to control the temperature.
\red{Note also that, because of the nonvanishing pressure, one may criticize the 
choice of vanishing barycentric velocity. In the general case, the mass and
energy balances need to be coupled to the momentum balance for $v$. Such systems,
but only for isothermal or stationary systems, have been analyzed in, e.g.,
\cite{BJPZ20,ChJu15,DDGG20,Dru16}. The choice $v=0$ is a mathematical simplification.}

If the molar masses $m_i$ of the components are not the same, we need to modify
the free energy according to \cite[Remark 1.2]{BJPZ20}
$$
  \psi = \theta\sum_{i=1}^n\frac{\rho_i}{m_i}\bigg(\log\frac{\rho_i}{m_i}-1\bigg)
	- c_W\rho\theta(\log\theta-1),
$$
where $c_W>0$ is the heat capacity. For simplicity, we have set
$m_i=1$ and $c_W=1$.

We show that the Maxwell--Stefan equations 
\begin{equation}\label{2.MSE}
  \pa_t\rho_i + \diver J_i = r_i, \quad
	d_i = -\sum_{j=1}^n b_{ij}\rho_i\rho_j\bigg(
	\frac{J_i}{\rho_i}-\frac{J_j}{\rho_j}\bigg), \quad i=1,\ldots,n,
\end{equation}
with $b_{ij}=b_{ji}>0$
can be formulated as \eqref{1.eq1} for a specific choice of $d_i$, $M_{ij}$, and $M_i$.
\red{The coefficients $b_{ij}$ may be interpreted as friction coefficients and
can depend on $(\bm\rho,\theta)$; see \cite[Section 4]{BoDr20}.
The equivalence between the Fick--Onsager and Maxwell-Stefan formulations was
thoroughly investigated in \cite{BoDr20}, and we adapt their proof to
our non-isothermal framework. For this, we introduce the matrix $B=(B_{ij})$ satisfying
$B_{ii}=\sum_{j=1,\,j\neq i}^n b_{ij}\rho_j$ and
$B_{ij}=-b_{ij}\rho_i $ for $j\neq i$. It is not invertible since $\bm\rho\in
\operatorname{ker}(B)$, but its group inverse $B^\#$ exists uniquely, satisfying 
$BB^\#=B^\#B=I-(\bm{\rho}/\rho)\otimes\bm{1}$ and
\begin{equation}\label{2.Bsharp}
  \sum_{j=1}^n B_{ij}^\#\rho_j = 0, \quad \sum_{j=1}^n B_{ji}^\# = 0\quad
	\mbox{for }i=1,\ldots,n.
\end{equation}
Furthermore, we introduce the projection $P=(P_{ij})=I-\bm{1}\otimes(\bm{\rho}/\rho)$ on
$\operatorname{span}\{\bm{\rho}\}^\perp$. 
}

\begin{proposition}\label{prop}
Define the driving forces
\red{\begin{equation}\label{2.di}
  d_i = \rho_i\na\frac{\mu_i}{\theta} - \frac{\rho_i}{\rho\theta}\na(\rho\theta)
	- 2\rho_i\theta\na\frac{1}{\theta} + q_i\rho_i\na\log\theta \quad 
	\mbox{for }i=1,\ldots,n,
\end{equation}
where the numbers $q_i\in\R$ satisfy $\sum_{i=1}^n q_i\rho_i=0$.}
Then \eqref{1.MSE} can be written as \eqref{1.eq1} with 
\begin{equation}\label{M}
  M_{ij} = \sum_{k=1}^n B_{ik}^\#\rho_k P_{kj}, \quad 
	M_i = -\theta\sum_{k=1}^n B_{ik}^\#\rho_k q_k
	\quad\mbox{for } i,j=1,\ldots,n,
\end{equation}
where $(M_{ij})$ is symmetric and 
$M_{ij}$ and $M_i$ satisfy \eqref{1.M1}. 
\end{proposition}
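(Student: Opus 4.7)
My plan is to begin by recasting the Maxwell--Stefan relation \eqref{2.MSE} in matrix form. A direct expansion gives $d_i = -\sum_j B_{ij}J_j$ with $B$ the matrix introduced before the statement. Since $\sum_i J_i=0$, the flux vector lies in $\{\bm{1}\}^\perp$, so applying the group inverse $B^\#$ and using $B^\# B = I-(\bm{\rho}/\rho)\otimes\bm{1}$ together with $\bm{1}^T\bm{J}=0$ inverts this to $\bm{J}=-B^\#\bm{d}$.

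Next I substitute the driving force \eqref{2.di}. The middle two terms of \eqref{2.di} are each of the form $\rho_i\,\bm{f}$ for a vector $\bm{f}$ independent of $i$, so after contracting with $B^\#$ they are annihilated by the first identity in \eqref{2.Bsharp}. Rewriting $\nabla\log\theta = -\theta\nabla(1/\theta)$ then yields
\[
  J_i = -\sum_k B^\#_{ik}\rho_k\nabla q_k + \theta\Big(\sum_k B^\#_{ik}\rho_k q_k\Big)\nabla\frac{1}{\theta},
\]
which already identifies $M_i = -\theta\sum_k B^\#_{ik}\rho_k q_k$. For the gradient-of-$q_k$ part I insert a zero via $\sum_k B^\#_{ik}\rho_k = 0$ in the form $(1/\rho)\sum_k B^\#_{ik}\rho_k\sum_j\rho_j\nabla q_j = 0$; this rewrites $\sum_k B^\#_{ik}\rho_k\nabla q_k$ as $\sum_j\bigl(\sum_k B^\#_{ik}\rho_k P_{kj}\bigr)\nabla q_j$, matching the definition of $M_{ij}$ in \eqref{M}.

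The two conditions \eqref{1.M1} then reduce to interchanging sums and invoking the second identity in \eqref{2.Bsharp}: $\sum_i M_{ij} = \sum_k\rho_k P_{kj}\sum_i B^\#_{ik} = 0$, and similarly $\sum_i M_i=0$. The real obstacle is the symmetry of $(M_{ij})$. Using $P_{kj}=\delta_{kj}-\rho_j/\rho$ together with \eqref{2.Bsharp} collapses the formula to $M_{ij}=B^\#_{ij}\rho_j$, so the symmetry amounts to showing that $B^\#D$ is symmetric, where $D=\operatorname{diag}(\rho_1,\ldots,\rho_n)$. I verify by direct computation that $BD$ is symmetric (for $i\neq j$ both $(BD)_{ij}$ and $(BD)_{ji}$ equal $-b_{ij}\rho_i\rho_j$, using $b_{ij}=b_{ji}$); equivalently, $B$ is self-adjoint under the weighted inner product $\langle x,y\rangle_{D^{-1}}=x^TD^{-1}y$. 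Since the group inverse commutes with transpose and with similarity transformations, $\widetilde B:=D^{-1/2}BD^{1/2}$ is a genuine symmetric matrix, its group inverse $\widetilde B^\#$ is symmetric, and the identity $B^\#D=D^{1/2}\widetilde B^\#D^{1/2}$ makes the symmetry of $B^\#D$ manifest.
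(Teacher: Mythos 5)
Your proof is correct, and while its skeleton (writing $\bm{d}=-B\bm{J}$, inverting with the group inverse, and checking \eqref{1.M1} from \eqref{2.Bsharp}) coincides with the paper's, you organize the two substantive steps differently. For the identification of $M_{ij}$ and $M_i$, the paper first massages the driving force analytically: it rewrites $d_i=\rho_i\na(\mu_i/\theta)-\rho_i\na\log(\rho/\theta)+q_i\rho_i\na\log\theta$, sums over $i$ to obtain $\sum_j\rho_j\na(\mu_j/\theta)=\rho\na\log(\rho/\theta)$, and thereby produces $\bm{d}=RP\na(\bm{\mu}/\theta)+\bm{q}^*\na\log\theta$ \emph{before} applying $B^\#$; you instead apply $B^\#$ first and let the kernel identity $\sum_k B^\#_{ik}\rho_k=0$ annihilate the two terms whose second factor is independent of $i$, reinserting the projection $P$ afterwards. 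These are algebraically equivalent, and your justification of $\bm{J}=-B^\#\bm{d}$ via $\bm{1}^\top\bm{J}=0$ is if anything more explicit than the paper's. The genuine divergence is the symmetry of $(M_{ij})$: the paper invokes the identity $B^\#=P^\top R\tau^\#P^\top$ with $\tau=BR$ from \cite[(4.26)]{BoDr20} and concludes $M=P^\top R\tau^\# RP$ is symmetric, whereas you first collapse $M$ to $B^\#D$ (legitimate, since $B^\#RP=B^\#R$ by the same kernel property) and then give a self-contained argument: $BD$ is symmetric by $b_{ij}=b_{ji}$, hence $\widetilde B=D^{-1/2}BD^{1/2}$ is symmetric, its group inverse is symmetric because the group inverse commutes with transposition and with similarity, and conjugating back yields the symmetry of $B^\#D$. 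This buys independence from the cited formula at the modest cost of requiring $\rho_i>0$ to form $D^{-1/2}$ --- a restriction that is already implicit in the surrounding setup (the potentials $\log(\rho_i/\theta)$ and the projection $P$ presuppose positive densities). Both routes are sound; yours is the more elementary and verifiable one for a reader without access to \cite{BoDr20}.
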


\red{The first three terms in the driving forces \eqref{2.di} are the same as
\cite[(4.18)]{BoDr20} and \cite[(2.11)]{BoDr15}, 
while the last term is motivated from \cite[(A5)]{TaAo99}.
A computation shows that $\sum_{i=1}^n d_i = 0$ which is consistent with \eqref{2.MSE}.
It is argued in \cite{BoDr20} that $M_{ij}$ is of the form 
$\rho_i(a_i(\bm{\rho},\theta)\delta_{ij}+\rho_j S_{ij}(\bm{\rho},\theta))$
for some functions $a_i$ and $S_{ij}$, and in the nondegenerate case, one may assume
that $a_i(\bm\rho,\theta)$ stays positive
when $\bm\rho\to\widetilde{\bm{\rho}}$ with $\widetilde\rho_i=0$ \cite[(6.6)]{BoDr20}.
This formulation motivates condition \eqref{1.M3}.}

\red{\begin{proof}
The proof is based on the equivalence between the Fick--Onsager and Maxwell--Stefan
formulations elaborated in \cite[Section 4]{BoDr20} for the isothermal case.
First, the driving forces can be formulated as
$$
  d_i = \rho_i\na\frac{\mu_i}{\theta} - \rho_i\na\log\frac{\rho}{\theta}
	+ q_i\rho_i\na\log\theta,
$$
which shows that
$$
  \sum_{j=1}^n\rho_j\na\frac{\mu_j}{\theta}
	= \sum_{j=1}^n\bigg(d_j + \rho_j\na\log\frac{\rho}{\theta}
	- q_j\rho_j\na\log\theta\bigg) = \rho\na\log\frac{\rho}{\theta}.
$$
Consequently, another formulation is
$$
  d_i = \rho_i\na\frac{\mu_i}{\theta} 
	- \frac{\rho_i}{\rho}\sum_{j=1}^n\rho_j\na\frac{\mu_j}{\theta}
  + q_i\rho_i\na\log\theta = \sum_{j=1}^n\rho_i P_{ij}\na\frac{\mu_j}{\theta}
	+ q_i\rho_i\na\log\theta.
$$
Setting $R=\operatorname{diag}(\rho_1,\ldots,\rho_n)$ and
$\bm{q}^*=\operatorname{diag}(q_1\rho_1,\ldots,q_n\rho_n)$, we obtain
$\bm{d} = R P\na(\bm{\mu}/\theta) + \bm{q}^*\na\log\theta$.
On the other hand, by \eqref{2.MSE},
$$
  d_i = -\bigg(\sum_{j=1,\,j\neq i}^n b_{ij}\rho_j\bigg)J_i 
	+ \sum_{j=1,\,j\neq i}^n b_{ij}\rho_i J_j = -\sum_{j=1}^n B_{ij}J_j.
$$
This shows that $\bm{d}=-B\bm{J}$ and hence
$\bm{J}=-B^\#\bm{d} = -B^\# RP\na(\bm{\mu}/\theta) - B^\# \bm{q}^*\na\log\theta$.
Thus, defining $M_{ij}$ and $M_i$ as in \eqref{M}, it follows that
$$
  J_i = -\sum_{j=1}^n M_{ij}\na\frac{\mu_j}{\theta} - M_i\na\frac{1}{\theta}.
$$
The matrix $\tau=BR$ is symmetric and so does $\tau^\#$. Moreover, 
by \cite[(4.26)]{BoDr20}, $B^\# = P^\top R\tau^\#P^\top$. 
Therefore, $M = B^\# RP = P^\top R\tau^\#RP$ is symmetric. We deduce from
the properties \eqref{2.Bsharp} that
$$
  \sum_{j=1}^n M_{ij} = \sum_{j,k=1}^n B^\#_{ik}\rho_k\bigg(\delta_{kj}
	- \frac{\rho_j}{\rho}\bigg) = 0,
	\quad \sum_{i=1}^n M_i = -\theta\sum_{j=1}^n\bigg(\sum_{i=1}^n B^\#_{ij}\bigg)
	\rho_j q_j = 0.
$$
This finishes the proof.
\end{proof}}


\section{Proof of Theorem \ref{thm.ex}}\label{sec.ex}

The idea of the proof is to reformulate equations \eqref{1.eq1}--\eqref{1.eq2}
in terms of the relative potentials $v_i$, to approximate the resulting equations
by an implicit Euler scheme, and to add some higher-order regularizations in space
for the variables $v_i$ and $w=\log\theta$.
The de-regularization limit is based on the compactness coming from the
entropy estimates and an estimate for the temperature.

Set $w_0=\log\theta_0$, $\eps>0$, $N\in\N$, and $\tau=T/N>0$. 
\red{To simplify the notation, we set $\bm{v}=(\bm{v}',0)=(v_1,\ldots,v_{n-1},0)$ and
$\bar{\bm{v}}=(\bar{v}_1,\ldots,\bar{v}_{n-1},0)$.
Let $(\bar{\bm{v}},\bar{w})\in L^\infty(\Omega;\R^{n+1})$ 
be given, and set $\rho_i(\bm{v})=\rho^0 e^{v_i}/\sum_{j=1}^{n}e^{v_j}$
for $i=1,\ldots,n-1$, $\rho_n=\rho^0-\sum_{i=1}^{n-1}\rho_i$, and
$q_i=\log\rho_i-w$ for $i=1,\ldots,n$.} We define the approximate scheme
\begin{align}
  0 &= \frac{1}{\tau}\int_\Omega(\rho_i(\bm v)-\bar\rho_i(\bar{\bm v}))\phi_i dx
	+ \int_\Omega\bigg(\sum_{j=1}^{n-1} M_{ij}(\bm\rho,e^w)\na v_j
	- M_i(\bm\rho,e^w)e^{-w}\na w\bigg)\cdot\na\phi_i dx \label{2.app1} \\
	&\phantom{xx}{}+ \eps\int_\Omega\big(D^2 v_i:D^2\phi_i + v_i\phi_i\big)dx
	- \int_\Omega r_i(\red{\Pi\bm{q}},e^w)\phi_i dx, \nonumber \\
  0 &= \frac{1}{\tau}\int_\Omega(E-\bar E)\phi_0 dx
	+ \int_\Omega\bigg(\kappa(\theta)\na\theta + \sum_{j=1}^{n-1} M_j(\bm\rho,e^w)
	\na v_j\bigg)\cdot\na\phi_0 dx \label{2.app2} \\
	&\phantom{xx}{}- \lambda\int_{\pa\Omega}(\theta_0-\theta)\phi_0 ds
	+ \eps\int_\Omega e^w\big(D^2 w:D^2\phi_0 + |\na w|^2\na w\cdot\na\phi_0\big)dx 
	\nonumber \\
	&\phantom{xx}{}+ \red{\eps\int_\Omega(e^{w_0}+e^w)(w-w_0)\phi_0 dx} \nonumber 
\end{align}
for test functions $\phi_i\in H^2(\Omega)$, $i=0,\ldots,n-1$. 
Here, $D^2 u$ is the Hessian matrix of the function $u$, ``:'' denotes
the Frobenius matrix product, and $E=\rho^0\theta$, $\bar E=\rho^0\bar\theta$.
The lower-order regularization $\eps(e^{w_0}+e^w)(w-w_0)$ yields an $L^2(\Omega)$
estimate for $w$. Furthermore,
the higher-order regularization guarantees that $v_i$, $w\in H^2(\Omega)\hookrightarrow
L^\infty(\Omega)$, while the $W^{1,4}(\Omega)$ regularization term for $w$ allows
us to estimate the higher-order terms when using the test function 
$e^{-w_0}-e^{-w}$.

{\em Step 1: solution of the linearized approximate problem.}
In order to define the fixed-point operator, we need to solve a linearized problem.
To this end, let $y^*=(\bm{v}^*,w^*)\in W^{1,4}(\Omega;\R^{n})$ and $\sigma\in[0,1]$ 
be given. We want to find the unique solution
$y=(\bm{v}',w)\in H^2(\Omega;\R^{n})$ to the linear problem
\begin{equation}\label{LM}
  a(y,\phi) = \sigma F(\phi) \quad\mbox{for all }
	\phi=(\phi_0,\ldots,\phi_{n-1})
	\in H^2(\Omega;\R^{n}),
\end{equation}
where
\begin{align*}
  a(y,\phi) &= \int_\Omega\sum_{i,j=1}^{n-1} M_{ij}(\bm{\rho}^*,e^{w^*})\na v_j
	\cdot\na\phi_i dx + \int_\Omega\kappa(e^{w^*})e^{w^*}\na w\cdot\na\phi_0 dx \\
	&\phantom{xx}{}+ \eps\int_\Omega\sum_{i=1}^{n-1}
	\big(D^2 v_i:D^2\phi_i + v_i\phi_i\big)	\\
	&\phantom{xx}{}+ \eps\int_\Omega e^{w^*}\big(D^2 w:D^2\phi_0 
	+ |\na w^*|^2\na w\cdot\na\phi_0\big)
	+ \red{\eps\int_\Omega (e^{w_0}+e^{w^*})w \phi_0 dx}, \\
  F(\phi) &= -\frac{1}{\tau}\int_\Omega\sum_{i=1}^{n-1}(\rho^*_i-\bar\rho_i)\phi_i dx
	- \frac{1}{\tau}\int_\Omega(E^*-\bar E)\phi_0 dx 
	+ \lambda\int_{\pa\Omega}(e^{w_0}-e^{w^*})\phi_0 dx \\
	&\phantom{xx}{}
	+ \int_\Omega\sum_{i=1}^{n-1} M_i(\bm{\rho}^*,e^{w^*})e^{-w^*}\na w^*\cdot\na\phi_i dx
	- \int_\Omega\sum_{j=1}^{n-1} M_j(\bm{\rho}^*,e^{w^*})\na v_j^*\cdot\na\phi_0 dx \\
	&\phantom{xx}{}+ \int_\Omega\sum_{i=1}^n r_i(\red{\Pi\bm{q}^*},e^{w^*})\phi_i dx
	+ \red{\eps \int_\Omega(e^{w_0}+e^{w^*}) w_0 \phi_0 dx}
\end{align*}
and $\rho_i^*=\rho_i(\bm{v}^*)$, $\rho^*=\sum_{i=1}^n\rho_i^*$, $E^*=\rho^0 e^{w^*}$. 
By Hypothesis (H3) and the generalized Poincar\'e inequality 
\cite[Chap.~2, Sec.~1.4]{Tem97}, we have
$$
  a(y,y) \ge \eps\int_\Omega\big(|D^2\bm{v}|^2 + |\bm{v}|^2\big)dx
	+ \eps\int_\Omega\red{e^{w^*}}(|D^2 w|^2 + w^2)dx 
	\ge \eps C(\|\bm{v}\|_{H^2(\Omega)}^2 + \|w\|_{H^2(\Omega)}^2).
$$
Thus, $a$ is coercive. Moreover, $a$ and $F$ are continuous on $H^2(\Omega;\R^{n})$.
The Lax--Milgram lemma shows that \eqref{LM} possesses a unique solution
$(\bm{v}',w)\in H^2(\Omega;\R^{n})$.

{\em Step 2: solution of the approximate problem.}
The previous step shows that the fixed-point operator
$S:W^{1,4}(\Omega;\R^{n})\times[0,1]\to W^{1,4}(\Omega;\R^{n})$,
$S(y^*,\sigma)=y$, where $y=(\bm{v}',w)$ solves \eqref{LM},
is well defined. It holds that $S(y,0)=0$, $S$ is continuous, and since
$S$ maps to $H^2(\Omega;\R^{n})$, which is compactly embedded into
$W^{1,4}(\Omega;\R^{n})$, it is also compact. It remains to determine
a uniform bound for all fixed points $y$ of $S(\cdot,\sigma)$, where $\sigma\in(0,1]$. 
Let $y$ be such a fixed point. Then
$y\in H^2(\Omega;\R^{n})$ solves \eqref{LM} with $(\bm{v}^*,w^*)$
replaced by $y=(\bm{v}',w)$. With the test functions $\phi_i=v_i$ for 
$i=1,\ldots,n-1$
and $\phi_0=e^{-w_0}-e^{-w}$ (we need this test function since $\phi_0=-e^{-w}$ does
not allow us to control the lower-order term), we obtain
\begin{align*}
  0 &= \frac{\sigma}{\tau}\int_\Omega\sum_{i=1}^{n-1}
	(\rho_i(\bm v)-\rho_i(\bar{\bm v}))v_i dx
	+ \frac{\sigma}{\tau}\int_\Omega(E-\bar E)(-e^{-w})dx
	+ \frac{\sigma}{\tau}\int_\Omega(E-\bar E)e^{-w_0}dx \\
	&\phantom{xx}{}+ \int_\Omega\sum_{i,j=1}^{n-1} M_{ij}\na v_i\cdot\na v_j dx
	+ \int_\Omega\kappa(e^w)e^w\na w\cdot\na(-e^{-w})dx 
	- \sigma\int_\Omega\sum_{i=1}^{n-1} r_iv_i dx \\
	&\phantom{xx}{}- \sigma\int_\Omega\sum_{j=1}^{n-1} M_j e^{-w}\na w\cdot\na v_j dx 
	+ \sigma\int_\Omega\sum_{j=1}^{n-1} M_j\na v_j\cdot\na(-e^{-w})dx \\
  &\phantom{xx}{}- \sigma\lambda\int_{\pa\Omega}(e^{w_0}-e^{w})(e^{-w_0}-e^{-w})dx
	+ \eps\int_\Omega\sum_{i=1}^{n-1}\big(|D^2 v_i|^2 + v_i^2\big)dx \\
	&\phantom{xx}{}+ \eps\int_\Omega(e^{w_0}+e^w)(w-w_0)(e^{-w_0}-e^{-w})dx \\
	&\phantom{xx}{}+ \eps\int_\Omega\big(|D^2 w|^2 \red{-} D^2w:\na w\otimes\na w 
	+ |\na w|^4\big)dx \\
	&=: I_1+\cdots+ I_{12}.
\end{align*}
We see immediately that $I_7+I_8=0$. Furthermore,
$$
  I_1+I_2 = \frac{\sigma}{\tau}\int_\Omega\bigg(\sum_{i=1}^{n-1}(\rho_i-\bar\rho_i)
	\frac{\pa h}{\pa\rho_i} + (\theta-\bar\theta)\frac{\pa h}{\pa\theta}\bigg)dx.
$$
The function $(\bm \rho',\theta) \mapsto h(\bm \rho',\theta)
= \sum_{i=1}^n\rho_i(\log\rho_i-1)-\rho^0\log\theta$ with
$\rho_n=\rho^0-\sum_{i=1}^{n-1}\rho_i$ is convex, 
since the second derivatives are given by 
$$
 	\frac{\partial^2 h}{\partial \rho_i^2} = \frac{1}{\rho_i}+\frac{1}{\rho_n}, \quad
	\frac{\partial^2 h}{\partial \theta^2} = \frac{\rho^0}{\theta^2}, \quad
	\frac{\partial^2 h}{\partial \rho_i \partial \theta} = 0, \quad 
	\frac{\partial^2 h}{\partial \rho_i \partial \rho_j} = \frac{1}{\rho_n},
$$
hence we can conclude in the same way as in \cite{JuSt13}
that the Hessian is positive definite by Sylvester's criterion.
This shows that
$$ 
  h(\bm\rho',\theta)-h(\bar{\bm\rho}',\bar\theta)
	\le \sum_{i=1}^{n-1}\frac{\pa h}{\pa\rho_i}(\bm\rho',\theta)(\rho_i-\bar\rho_i)
	+ \frac{\pa h}{\pa\theta}(\bm\rho',\theta)(\theta-\bar\theta)
$$
and consequently,
$$ 
  I_1 + I_2 \ge \frac{\sigma}{\tau}\int_\Omega\big(h(\bm\rho',\theta)
	- h(\bar{\bm\rho}',\bar\theta)\big)dx.
$$

\red{For the estimate of $I_4$, we need the following lemma.
\begin{lemma}\label{lem.M}
Let the matrix $(M_{ij})\in\R^{n\times n}$ satisfy \eqref{1.M1} and \eqref{1.M2}. Then
$$
  \sum_{i,j=1}^{n-1}M_{ij}(z_i-z_n)(z_j-z_n)
	\ge \frac{c_M}{n}\sum_{i=1}^{n-1}|z_i-z_n|^2.
$$
\end{lemma}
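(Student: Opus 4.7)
The plan is to exploit the zero row/column sum property \eqref{1.M1} to embed the reduced $(n-1)\times(n-1)$ quadratic form into the full $n\times n$ one, then apply the assumed semidefiniteness \eqref{1.M2} and estimate the projected norm by Cauchy--Schwarz.

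Concretely, given $\bm{z}\in\R^n$, I would define the auxiliary vector $\bm{y}\in\R^n$ by $y_i=z_i-z_n$ for $i=1,\ldots,n-1$ and $y_n=0$. Because $y_n=0$, only the indices $1,\ldots,n-1$ contribute to $\sum_{i,j=1}^n M_{ij} y_i y_j$, so
\[
  \sum_{i,j=1}^{n-1} M_{ij}(z_i-z_n)(z_j-z_n) \;=\; \sum_{i,j=1}^n M_{ij} y_i y_j.
\]
Applying \eqref{1.M2} to $\bm{y}$ gives $\sum_{i,j=1}^n M_{ij}y_iy_j \ge c_M|\Pi\bm{y}|^2$. (The first application of \eqref{1.M1} enters implicitly through the validity of \eqref{1.M2} on the quotient; alternatively, one can shift $\bm{z}$ by any multiple of $\bm{1}$ without changing the quadratic form, and $\bm{y}=\bm{z}-z_n\bm{1}-(\text{extra adjustment})$, but the direct embedding above is cleanest.)

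It remains to bound $|\Pi\bm{y}|^2$ from below in terms of $\sum_{i=1}^{n-1}|z_i-z_n|^2$. Since $\Pi=I-\bm{1}\otimes\bm{1}/n$ and $y_n=0$,
\[
  |\Pi\bm{y}|^2 = |\bm{y}|^2 - \frac{1}{n}\bigg(\sum_{i=1}^n y_i\bigg)^2
	= \sum_{i=1}^{n-1}|z_i-z_n|^2 - \frac{1}{n}\bigg(\sum_{i=1}^{n-1}(z_i-z_n)\bigg)^2.
\]
The Cauchy--Schwarz inequality yields $\bigl(\sum_{i=1}^{n-1}(z_i-z_n)\bigr)^2 \le (n-1)\sum_{i=1}^{n-1}|z_i-z_n|^2$, so
\[
  |\Pi\bm{y}|^2 \;\ge\; \Big(1-\frac{n-1}{n}\Big)\sum_{i=1}^{n-1}|z_i-z_n|^2 \;=\; \frac{1}{n}\sum_{i=1}^{n-1}|z_i-z_n|^2,
\]
which combined with the previous display gives the claim.

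There is no serious obstacle: the only two ideas needed are the embedding trick $y_n=0$ (which reduces the $(n-1)$-dimensional problem to an $n$-dimensional one where \eqref{1.M2} applies directly) and Cauchy--Schwarz. The constant $c_M/n$ is sharp in this argument, arising precisely from the loss $(n-1)/n$ in Cauchy--Schwarz when $\bm{y}$ is proportional to the truncated all-ones vector.
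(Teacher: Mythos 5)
Your proof is correct and follows essentially the same route as the paper: apply \eqref{1.M2} to a vector representing the differences $z_i-z_n$ and then use Cauchy--Schwarz to bound $|\Pi\,\cdot\,|^2$ from below by $\frac{1}{n}\sum_{i=1}^{n-1}(z_i-z_n)^2$. The only (cosmetic) difference is that your choice $y_n=0$ makes the embedding of the reduced quadratic form into the full one trivial, so \eqref{1.M1} is never actually needed, whereas the paper invokes \eqref{1.M1} to identify $\sum_{i,j=1}^{n-1}M_{ij}(z_i-z_n)(z_j-z_n)$ with $\sum_{i,j=1}^{n}M_{ij}z_iz_j$ and works with $\Pi\bm{z}$ (which equals your $\Pi\bm{y}$ since $\bm{y}=\bm{z}-z_n\bm{1}$).
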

\begin{proof}
We use \eqref{1.M1} and then \eqref{1.M2} to find for any $\bm{z}\in\R^n$ that
\begin{equation}\label{3.MM}
  \sum_{i,j=1}^{n-1}M_{ij}(z_i-z_n)(z_j-z_n)
	= \sum_{i,j=1}^n M_{ij}z_iz_j \ge c_M|\Pi\bm{z}|^2.
\end{equation}
By Jensen's inequality, we have $(n-1)\sum_{i=1}^{n-1}z_i^2\ge(\sum_{i=1}^{n-1}z_i)^2$,
which is equivalent to $n|\Pi\bm{z}|^2\ge \sum_{i=1}^{n-1}(z_i-z_n)^2$. Inserting
this inequality into \eqref{3.MM} finishes the proof.
\end{proof}
By Lemma \ref{lem.M} and Hypothesis (H5),
\begin{align*}
	I_4 &= \frac12\int_\Omega\bigg(\sum_{i,j=1}^{n}M_{ij}\na q_i\cdot\na q_j
	+ \sum_{i,j=1}^{n-1}M_{ij}\na v_i\cdot\na v_j\bigg) dx \\
	&\ge \frac{c_M}{2}\int_\Omega|\na\Pi\bm{q}|^2dx 
	+ \frac{c_M}{2n}\int_\Omega|\na\bm{v}|^2 dx, \\
	I_6 &= \sigma\int_\Omega\sum_{i=1}^n r_i q_i dx 
	\ge \sigma c_r\int_\Omega|\Pi\bm{q}|^2 dx.
\end{align*}}
Next, we have
\begin{align*}
  I_5 &= \int_\Omega\kappa(e^w)|\na w|^2 dx, \quad
	I_9 = 2\sigma\lambda\int_{\pa\Omega}(\cosh(w_0-w)-1)ds \ge 0, \\
	I_{11} &= 2\eps\int_\Omega \left(w-w_0\right)\sinh(w-w_0)dx 
	\ge \eps\int_\Omega \left(w-w_0\right)^2 dx, \\
	I_{12} &= \frac{\eps}{2}\int_\Omega\big(|D^2w|^2 
	+ |D^2w-\na w\otimes\na w|^2 + |\na w|^4\big)dx.
\end{align*}
Summarizing these estimates and applying the generalized Poincar\'e inequality, 
we arrive at the {\em discrete entropy inequality}
\begin{align}
  \frac{\sigma}{\tau}\int_\Omega & \big(h(\bm{\rho}',\theta) + e^{-w_0}E\big)dx
	+ \frac{c_M}{2}\int_\Omega\bigg(\frac{1}{n}|\na\bm{v}|^2 + |\na\Pi\bm{q}|^2
	+ \sigma c_r|\Pi\bm{q}|^2\bigg)dx \nonumber \\
	&\phantom{xx}{}+ \eps C\big(\|\bm v\|_{H^2(\Omega)}^2 + \|w\|_{H^2(\Omega)}^2
	+ \|w\|_{W^{1,4}(\Omega)}^4\big) + \int_\Omega\kappa(e^w)|\na w|^2 dx \nonumber \\
	&\le \frac{\sigma}{\tau}\int_\Omega\big(h(\bar{\bm\rho}',\bar\theta)
	+ e^{-w_0}\bar E\big)dx +  2\eps\|w_0\|_{L^2(\Omega)}^2. \label{2.ei}
\end{align}
\red{We observe that the left-hand side is bounded from below since
$-\rho^0\log\theta+e^{-w_0}E=\rho^0(-\log\theta+\theta/\theta_0)$ is
bounded from below.
The bound for $\Pi\bm{q}$ implies an $L^2(\Omega)$ bound for $\bm{v}$
since $|\bm{v}|^2\le n|\Pi\bm{q}|^2$; see the proof of Lemma \ref{lem.M}.}

Estimate \eqref{1.ei} gives a uniform bound for $(\bm v',w)$ in $H^2(\Omega;\R^{n})$ 
and consequently also in $W^{1,4}(\Omega;$ $\R^{n})$, which proves the claim.
We infer from the Leray--Schauder fixed-point theorem that there exists a solution
$(\bm v',w)$ to \eqref{2.app1}--\eqref{2.app2}.

{\em Step 3: temperature estimate.} We need a better estimate for the temperature.

\begin{lemma}\label{lem.theta}
Let $(\bm\rho,w)$ be a solution to \eqref{2.app1}--\eqref{2.app2} and set
$\theta=e^w$. Then there exists a constant $C>0$ independent of $\eps$ and
$\tau$ such that
$$
  \frac{1}{\tau}\int_\Omega\rho^0\theta^2 dx 
	+ \frac12\int_\Omega\kappa(\theta)|\na\theta|^2 dx
	\le C + \frac{1}{\tau}\int_\Omega\rho^0\bar{\theta}^2 dx
	+ C\int_\Omega|\na\bm v|^2 dx.
$$
\end{lemma}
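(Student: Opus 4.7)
The plan is to insert $\phi_0 = \theta = e^w$ as a test function in \eqref{2.app2} and then bound each of the resulting terms. Admissibility of $\phi_0$ in $H^2(\Omega)$ follows because the fixed-point solution satisfies $w \in H^2(\Omega)$, hence $w \in L^\infty(\Omega)$ by Sobolev embedding in $\R^3$, and from $\na\theta = e^w\na w$ and $D^2\theta = e^w(D^2 w + \na w \otimes \na w)$ one sees that $\theta \in H^2(\Omega)$ (using also $H^2 \hookrightarrow W^{1,6}$ to handle the quadratic gradient term).

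\textbf{Main terms.} For the discrete time derivative I would use the convexity-type inequality $(\theta - \bar\theta)\theta \ge \tfrac{1}{2}(\theta^2 - \bar\theta^2)$, producing $\tfrac{1}{2\tau}\int_\Omega \rho^0(\theta^2 - \bar\theta^2)\,dx$ on the left. The Fourier term yields $\int_\Omega \kappa(\theta)|\na\theta|^2\,dx$ directly. For the Dufour coupling I would estimate
\[
\Big|\int_\Omega \sum_{j=1}^{n-1} M_j \na v_j \cdot \na\theta\,dx\Big|
\le \tfrac{1}{2}\int_\Omega \kappa(\theta)|\na\theta|^2\,dx + C\int_\Omega |\na\bm v|^2\,dx
\]
by Young's inequality, using the crucial combination of (H3), which gives $|M_j/\theta| \le C$, and (H4), which gives $\kappa(\theta) \ge c_\kappa \theta^2$, so that $M_j^2/\kappa(\theta) \le (M_j/\theta)^2/c_\kappa$ is bounded. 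The boundary integral $-\lambda\int_{\pa\Omega}(\theta_0-\theta)\theta\,ds = \lambda\int_{\pa\Omega}\theta^2\,ds - \lambda\theta_0\int_{\pa\Omega}\theta\,ds$ is bounded below by $\tfrac{\lambda}{2}\int_{\pa\Omega}\theta^2\,ds - \tfrac{1}{2}\lambda\theta_0^2|\pa\Omega|$ via Young's inequality, contributing a non-negative boundary piece and an absolute constant to the right-hand side.

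\textbf{Regularizations---the delicate step.} The hard part will be handling the $\eps$-regularizations in \eqref{2.app2}. Substituting $\phi_0 = e^w$ into the higher-order term and using $D^2\phi_0 = e^w(D^2 w + \na w \otimes \na w)$ and $\na\phi_0 = e^w\na w$, one obtains the contribution
\[
\eps\int_\Omega \theta^2\big(|D^2 w|^2 + D^2 w : (\na w \otimes \na w) + |\na w|^4\big)\,dx,
\]
which is non-negative thanks to the pointwise identity $|A|^2 + A{:}B + |B|^2 = |A + B/2|^2 + \tfrac{3}{4}|B|^2 \ge 0$ applied with $A = D^2 w$ and $B = \na w \otimes \na w$ (note that $|B|^2 = |\na w|^4$); this term is therefore discarded. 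For the lower-order regularization $\eps\int_\Omega (e^{w_0} + e^w)(w - w_0)\theta\,dx$, I would exploit the elementary one-variable bound $(w-w_0)e^{\alpha w} \ge -\alpha^{-1}e^{\alpha w_0 - 1}$ with $\alpha = 1$ and $\alpha = 2$ to conclude that this term is bounded below by $-C\eps \ge -C$ with $C$ depending only on $\theta_0$ and $|\Omega|$. Collecting the estimates, absorbing $\tfrac{1}{2}\int_\Omega \kappa(\theta)|\na\theta|^2\,dx$ on the left, and multiplying the resulting inequality by $2$ yields the claimed bound.
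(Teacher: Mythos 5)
Your proposal is correct and follows essentially the same route as the paper: test with $\phi_0=\theta$, use $(\theta-\bar\theta)\theta\ge\tfrac12(\theta^2-\bar\theta^2)$, absorb the Dufour term via the boundedness of $M_j/\theta$ and $\kappa(\theta)\ge c_\kappa\theta^2$, discard the sign-definite higher-order regularization, and bound the lower-order one by a constant. The only (cosmetic) differences are that the paper verifies the sign of the $\eps D^2$-term by completing the square in the $\theta$-variables rather than via your identity $|A|^2+A{:}B+|B|^2\ge 0$ in the $w$-variables, and it disposes of the lower-order term by noting that $-\theta^2\log\theta$ dominates rather than by your explicit one-variable minimization.
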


\begin{proof}
We use the test function $\theta$ in \eqref{2.app2}.
Observing that $(E-\bar E)\theta=\rho^0(\theta-\bar\theta)\theta
\ge(\rho^0/2)(\theta^2-\bar{\theta}^2)$ and that $\kappa(\theta)
\ge c_\kappa(1+\theta^2)$ by Hypothesis (H4), we find that
\begin{align}
  \frac{1}{2\tau}&\int_\Omega\rho^0(\theta^2-\bar{\theta}^2)dx
	+ \frac12\int_\Omega\kappa(\theta)|\na\theta|^2 dx
	+ \frac{c_\kappa}{2}\int_\Omega\theta^2|\na\theta|^2 dx
	- \lambda\int_{\pa\Omega}(\theta_0-\theta)\theta dx \nonumber \\
	&\le -\sum_{j=1}^{n-1}\int_\Omega M_j\na v_j\cdot\na\theta dx
	- \eps\int_\Omega\theta(D^2\log\theta:D^2\theta 
	+ |\na\log\theta|^2\na\log\theta\cdot\na\theta)dx \nonumber \\
  &\phantom{xx}{}- \eps\int_\Omega(\theta_0+\theta)(\log\theta-\log\theta_0)\theta dx
	\nonumber \\
	&=: J_1+J_2+J_3. \label{2.aux}
\end{align}
Since $M_j/\theta$ is assumed to be bounded,
$$
  J_1 \le \frac{c_\kappa}{2}\int_\Omega\theta^2|\na\theta|^2 dx
	+ C\sum_{j=1}^{n-1}\int_\Omega|\na v_j|^2 dx.
$$
Furthermore,
\begin{align*}
  J_2 &= -\eps\int_\Omega\bigg(
	-\frac{1}{\theta}\na\theta\cdot D^2\theta\na\theta
	+ |D^2\theta|^2	+ \frac{1}{\theta^2}|\na\theta|^4\bigg)dx \\
  &= -\frac{\eps}{2}\int_\Omega\bigg(|D^2\theta|^2 
	+ \frac{1}{\theta^2}|\na\theta|^4
	+ \bigg|D^2\theta-\frac{1}{\theta}
	\na\theta\otimes\na\theta\bigg|^2\bigg) dx \le 0.
\end{align*}
The last integral $J_3$ is bounded since $-\theta^2\log\theta$ is the dominant term.
The last term on the left-hand side of \eqref{2.aux} is bounded from below by
$-(\lambda/2)\int_{\pa\Omega}\theta_0^2 dx$, which finishes the proof.
\end{proof}

\begin{remark}\rm
Better estimates can be derived if we assume that
$\kappa(\theta)\ge c_\kappa(1+\theta^{\alpha+1})$ for $\alpha\in(1,2)$.
Indeed, using $\theta^\alpha$ as a test function in \eqref{2.app2}, we find that
\begin{align}
  \frac{1}{\tau}&\int_\Omega\rho^0(\theta-\bar\theta)\theta^{\alpha}dx
	+ \alpha c_\kappa\int_\Omega\theta^{2\alpha}|\na\theta|^2 dx
  - \lambda\int_{\pa\Omega}(\theta_0-\theta)\theta^\alpha dx \nonumber \\
	&\le -\alpha\sum_{j=1}^{n-1}\int_\Omega M_j\theta^{\alpha-1}\na v_j\cdot\na\theta dx 
	- \eps\int_\Omega(\theta_0 + \theta)(\log \theta-\log \theta_0)\theta^\alpha
	 dx \nonumber \\
	&\phantom{xx}{}- \eps\int_\Omega\theta\big(D^2\log\theta:D^2\theta^\alpha
	+ |\na\log\theta|^2\na\log\theta\cdot\na\theta^\alpha\big) dx \nonumber \\
	&=: J_4+J_5+J_6. \label{2.rem}
\end{align}
A tedious but straightforward computation shows that $J_6\ge 0$ if $\alpha\in(1,2)$.
Furthermore, since $M_j/\theta$ is bounded,
$$
  J_4 \le \frac{\alpha c_\kappa}{2}\int_\Omega\theta^{2\alpha}|\na\theta|^2 dx
	+ C\sum_{j=1}^{n-1}\int_\Omega|\na v_j|^2 dx.
$$
The first integral on the right-hand side is controlled by the left-hand side
of \eqref{2.rem}. This yields a bound for $\theta^{\alpha+1}\in 
L^\infty(0,T;L^1(\Omega))\cap L^2(0,T;H^1(\Omega))\subset L^{8/3}(\Omega_T)$
(see Lemma \ref{lem.theta3}) and consequently
$\theta\in L^{8(\alpha+1)/3}(\Omega_T)$, which is better than the result
in Lemma \ref{lem.theta3}.
\qed
\end{remark}

{\em Step 4: uniform estimates.} Let $((\bm{v}')^k,w^k)$ be a solution to 
\eqref{2.app1}--\eqref{2.app2} for given 
$(\bm{v}')^{k-1}=\bar{\bm{v}}'$ and
$w^{k-1}=\bar{w}$, where $k\in\N$. 
We set 
$$  
  \theta^k=\exp(w^k), \quad 
	\rho_i^k=\exp(w^k+q_i^k) = \frac{\rho^0 e^{v_i^k}}{\sum_{j=1}^n e^{v_j^k}}
$$ 
for $i=1,\ldots,n-1$, and $E^k=\rho^0\theta^k$.
We introduce piecewise constant functions in time. For this, let
$\rho_i^{(\tau)}(x,t)=\rho_i^k(x)$, $\theta^{(\tau)}(x,t)=\theta^k(x)$, 
$v_i^{(\tau)}(x,t)=v_i^k(x)$, $q_i^{(\tau)}=\log(\rho_i^{(\tau)}/\theta^{(\tau)})$,
and $E^{(\tau)}(x,t)=E^k(x)$ 
for $x\in\Omega$, $t\in((k-1)\tau,k\tau]$, $k=1,\ldots,N$. 
At time $t=0$, we set $\rho_i^{(\tau)}(x,0)=\rho_i^0(x)$ and
$\theta^{(\tau)}(x,0)=\theta^0(x)$ for $x\in\Omega$. Furthermore, we
introduce the shift operator $(\sigma_\tau\rho_i^{(\tau)})(x,t)=\rho_i^{k-1}(x)$
for $x\in\Omega$, $t\in((k-1)\tau,k\tau]$. 
Let $(\bm\rho')^{(\tau)}=(\rho^{(\tau)}_1,\ldots,\rho^{(\tau)}_{n-1})$. 
Then $((\bm\rho')^{(\tau)},\theta^{(\tau)})$
solves (see \eqref{2.app1}--\eqref{2.app2})
\begin{align}
  0 &= \frac{1}{\tau}\int_0^T\int_\Omega(\rho^{(\tau)}_i-\sigma_\tau\rho_i^{(\tau)})
	\phi_i dxdt \label{2.app3} \\
	&\phantom{xx}{}+ \int_0^T\int_\Omega\bigg(\sum_{j=1}^{n-1}
	M_{ij}(\bm\rho^{(\tau)},\theta^{(\tau)})\na v_j^{(\tau)}
	+ M_i(\bm\rho^{(\tau)},\theta^{(\tau)})\na\frac{1}{\theta^{(\tau)}}\bigg)
	\cdot\na\phi_i dxdt \nonumber \\
	&\phantom{xx}{}+ \eps\int_0^T\int_\Omega\big(D^2 v_i^{(\tau)}:D^2\phi_i
	+ v_i^{(\tau)}\phi_i\big)dxdt 
	- \int_0^T\int_\Omega r_i(\Pi\bm q^{(\tau)},\theta^{(\tau)})
	\phi_i dxdt, \nonumber \\
	0 &= \frac{1}{\tau}\int_0^T\int_\Omega(E^{(\tau)}-\sigma_\tau E^{(\tau)})\phi_0 dxdt
	- \lambda\int_0^T\int_{\pa\Omega}(\theta_0-\theta^{(\tau)})\phi_0dsdt 
	\label{2.app4} \\
	&\phantom{xx}{}+ \int_0^T\int_\Omega\bigg(\kappa(\theta^{(\tau)})\na\theta^{(\tau)}
	+ \sum_{j=1}^{n-1} M_j(\bm\rho^{(\tau)},\theta^{(\tau)})\na v_j^{(\tau)}\bigg)
	\cdot\na\phi_0 dxdt \nonumber \\
	&\phantom{xx}{}+ \eps\int_0^T\int_\Omega
	\theta^{(\tau)}\big(D^2\log\theta^{(\tau)}:D^2\phi_0
	+ |\na\log\theta^{(\tau)}|^2\na\log\theta^{(\tau)}\cdot\na\phi_0\big)dxdt 
	\nonumber \\
	&\phantom{xx}{}+ \eps\int_0^T\int_\Omega(\theta_0 + \theta^{(\tau)})
	(\log\theta^{(\tau)}-\log \theta_0 )\phi_0 dxdt. \nonumber 
\end{align}

The discrete entropy inequality \eqref{2.ei} and the $L^\infty$ bound for 
$\rho_i^{(\tau)}$ imply the following uniform bounds:
\begin{align*}
  \|\rho_i^{(\tau)}\|_{L^\infty(0,T;L^\infty(\Omega))}
	+ \|\theta^{(\tau)}\|_{L^\infty(0,T;L^1(\Omega))}
  &\le C, \\
	\|v_i^{(\tau)}\|_{L^2(0,T;H^1(\Omega))}
	+ \|\kappa(\theta^{(\tau)})^{1/2}\na\log\theta^{(\tau)}\|_{L^2(\Omega_T)} 
	&\le C, \\
	\eps^{1/2}\|v_i^{(\tau)}\|_{L^2(0,T;H^2(\Omega))}
	+ \eps^{1/2}\|\log\theta^{(\tau)}\|_{L^2(0,T;H^2(\Omega))} &\le C, \\
	\eps^{1/4}\|\log\theta^{(\tau)}\|_{L^4(0,T;W^{1,4}(\Omega))} &\le C,
\end{align*}
for all $i=1,\ldots,n-1$, where $C>0$ is independent of $\eps$ and $\tau$. 
Hypothesis (H4) yields
\begin{equation}\label{3.nath0}
  \|\na\theta^{(\tau)}\|_{L^2(\Omega_T)} 
	+ \|\na\log\theta^{(\tau)}\|_{L^2(\Omega_T)} \le C.
\end{equation}

\begin{lemma}[Estimates for the temperature]
There exists a constant $C>0$ which does not depend on $\eps$ or $\tau$ such that
\begin{equation}\label{3.nath}
  \|\theta^{(\tau)}\|_{L^2(0,T;H^1(\Omega))}
	+ \|\log\theta^{(\tau)}\|_{L^2(0,T;H^1(\Omega))} \le C.
\end{equation}
\end{lemma}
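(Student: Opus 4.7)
The plan is to iterate Lemma \ref{lem.theta} over the time layers and then invoke Poincar\'e--Wirtinger. First I would apply Lemma \ref{lem.theta} at each time step $k=1,\ldots,m\le N$, multiply the resulting inequality by $\tau$, and sum so that the boundary terms in $\theta^2$ telescope, obtaining
\[
  \int_\Omega \rho^0 (\theta^m)^2\,dx + \tfrac12 \sum_{k=1}^m \tau \int_\Omega \kappa(\theta^k)|\na\theta^k|^2\,dx \le C\bigl(T + \|\rho^0(\theta^0)^2\|_{L^1(\Omega)} + \|\na\bm v^{(\tau)}\|_{L^2(\Omega_T)}^2\bigr).
\]
The right-hand side is bounded uniformly in $\eps$ and $\tau$: the initial contribution is controlled by Hypothesis (H2), and the gradient bound on $\bm v^{(\tau)}$ has already been extracted from the discrete entropy inequality \eqref{2.ei}. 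Taking the supremum in $m$ yields $\theta^{(\tau)}\in L^\infty(0,T;L^2(\Omega))$, hence $\theta^{(\tau)}\in L^2(0,T;L^2(\Omega))$. Combined with $\kappa(\theta^{(\tau)})\ge c_\kappa$ from Hypothesis (H4), the dissipation term then forces $\na\theta^{(\tau)}\in L^2(\Omega_T)$, giving the desired bound on $\|\theta^{(\tau)}\|_{L^2(0,T;H^1(\Omega))}$.

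For $\log\theta^{(\tau)}$, the gradient bound $\|\na\log\theta^{(\tau)}\|_{L^2(\Omega_T)}\le C$ is already recorded in \eqref{3.nath0}, so it remains to estimate $\log\theta^{(\tau)}$ in $L^2(0,T;L^2(\Omega))$. Here I would apply Poincar\'e--Wirtinger at each time slice,
\[
  \|\log\theta^{(\tau)}(\cdot,t)\|_{L^2(\Omega)}\le C\bigl(\|\na\log\theta^{(\tau)}(\cdot,t)\|_{L^2(\Omega)} + \bigl|\textstyle\int_\Omega \log\theta^{(\tau)}(\cdot,t)\,dx\bigr|\bigr),
\]
and show that the spatial mean is bounded uniformly in $t$. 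This follows by splitting $\log\theta=\theta/\theta_0 - (\theta/\theta_0-\log\theta)$: the first summand is integrable by the already established $L^\infty(0,T;L^1(\Omega))$ bound on $\theta^{(\tau)}$, while the second is bounded below pointwise by the constant $1-\log\theta_0$ (minimum of $f(\theta)=\theta/\theta_0 - \log\theta$) and bounded above in $L^1$ via the discrete entropy inequality \eqref{2.ei}, using the lower bound $\rho^0\ge n\rho_*>0$ from Hypothesis (H2). Squaring the Poincar\'e inequality and integrating in time closes the estimate.

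I do not anticipate a genuine obstacle. The only points requiring care are that all constants remain independent of $\eps$ and $\tau$ and that the discrete-time estimates pass cleanly to the piecewise-constant interpolants. The former is guaranteed because the $\eps$-regularizations were designed so as to contribute nonnegative terms on the left-hand side of both \eqref{2.ei} and Lemma \ref{lem.theta}; the latter is automatic from the definitions of $\theta^{(\tau)}$ and $v_i^{(\tau)}$ introduced in Step 4, since $\sum_{k=1}^N \tau\int_\Omega F(\theta^k,\bm v^k)\,dx = \int_0^T\int_\Omega F(\theta^{(\tau)},\bm v^{(\tau)})\,dx\,dt$ for any integrand $F$.
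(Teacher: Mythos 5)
Your proof is correct, and the $\log\theta^{(\tau)}$ half coincides with the paper's argument: both control the spatial mean of $\log\theta^{(\tau)}$ uniformly in time via the entropy inequality \eqref{2.ei} (the integrand $\rho^0(\theta/\theta_0-\log\theta)$ is bounded below pointwise and its integral is bounded above, so $\log\theta^{(\tau)}\in L^\infty(0,T;L^1(\Omega))$), and then combine the gradient bound \eqref{3.nath0} with Poincar\'e--Wirtinger. For the $\theta^{(\tau)}$ half you take a different route: the paper does \emph{not} invoke Lemma \ref{lem.theta} here at all. Instead it reads off $\|\na\theta^{(\tau)}\|_{L^2(\Omega_T)}\le C$ directly from the entropy dissipation term $\int\kappa(\theta)|\na\log\theta|^2\,dx$ together with the lower bound $\kappa(\theta)\ge c_\kappa(1+\theta^2)$ (so that $\kappa(\theta)|\na\log\theta|^2\ge c_\kappa|\na\theta|^2$; this is exactly \eqref{3.nath0}), and then uses the $L^\infty(0,T;L^1(\Omega))$ bound on $\theta^{(\tau)}$ from the entropy plus Poincar\'e--Wirtinger, in complete parallel with the $\log\theta$ case. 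Your telescoping of Lemma \ref{lem.theta} is also valid --- the right-hand side is controlled by $\|\theta^0\|_{L^\infty}$, $\rho^0\le n\rho^*$, and the $L^2(\Omega_T)$ bound on $\na\bm{v}^{(\tau)}$ from \eqref{2.ei}, and $\rho^0\ge n\rho_*$ converts $\int_\Omega\rho^0(\theta^m)^2dx\le C$ into an $L^\infty(0,T;L^2(\Omega))$ bound --- and it even yields the slightly stronger conclusion $\theta^{(\tau)}\in L^\infty(0,T;L^2(\Omega))$. The trade-off is that you spend the temperature test-function estimate earlier than necessary; in the paper that lemma is reserved for the subsequent, genuinely stronger bound on $(\theta^{(\tau)})^2$ in $L^2(0,T;H^1(\Omega))$ and the resulting $L^{16/3}(\Omega_T)$ integrability of Lemma \ref{lem.theta3}, whereas the present lemma is obtained by the cheaper entropy-only argument. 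Two cosmetic remarks: the quantities that telescope are the time-difference terms $\int\rho^0(\theta^k)^2dx-\int\rho^0(\theta^{k-1})^2dx$, not boundary terms; and the $\eps$-term $J_3$ in the proof of Lemma \ref{lem.theta} is not a nonnegative left-hand-side contribution but a term bounded above by a constant, which is already absorbed into the $C$ of that lemma, so your citation of it is unaffected.
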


\begin{proof}
The entropy inequality shows that $-\log\theta^{(\tau)}+\theta^{(\tau)}$ is 
uniformly bounded from above, which shows that $|\log\theta^{(\tau)}|$ is uniformly
bounded too and hence, $\log\theta^{(\tau)}$ is bounded in $L^\infty(0,T;L^1(\Omega))$.
Together with the $L^\infty(0,T;L^1(\Omega))$ bound for $\theta^{(\tau)}$, 
estimate \eqref{3.nath0}, and the 
Poincar\'e--Wirtinger inequality, we find that
\begin{align*}
  \|\theta^{(\tau)}\|_{L^2(\Omega_T)}
	&\le C\|\theta^{(\tau)}\|_{L^2(0,T;L^1(\Omega))}
	+ \|\na\theta^{(\tau)}\|_{L^2(\Omega_T)} \le C, \\
 \|\log\theta^{(\tau)}\|_{L^2(\Omega_T)}
	&\le C\|\log\theta^{(\tau)}\|_{L^2(0,T;L^1(\Omega))}
	+ \|\na\log\theta^{(\tau)}\|_{L^2(\Omega_T)} \le C,
\end{align*}
from which we conclude the proof.
\end{proof}

\red{We proceed by proving more uniform estimates. Because of the $L^2(\Omega_T)$
bound of $\na v_i^{(\tau)}$ and
\begin{align}
  \int_0^T\int_\Omega|\na\rho_i^{(\tau)}|^2dx dt
	&= \int_0^T\int_\Omega|\na\rho^0|^2\bigg|\frac{\exp(v_i^{(\tau)})}{\sum_{j=1}^n
	\exp(v_j^{(\tau)})}\bigg|^2dxdt \nonumber \\
	&\phantom{xx}{}+ \int_0^T\int_\Omega\bigg|\frac{\exp(v_i^{(\tau)})\na v_i^{(\tau)}}{
	\sum_{j=1}^n \exp(v_j^{(\tau)})}
  - \frac{\exp(v_i^{(\tau)})\sum_{j=1}^n\exp(v_j^{(\tau)})\na v_j^{(\tau)}}{
	(\sum_{j=1}^n \exp(v_j^{(\tau)}))^2}\bigg|^2 dxdt \nonumber \\
	&\le \int_0^T\int_\Omega|\na\rho^0|^2 dxdt
	+ 2\int_0^T\int_\Omega|\na\bm{v}|^2dxdt\le C, \label{est.rho}
\end{align}
$(\na\rho_i^{(\tau)})$ is bounded in $L^2(\Omega_T)$ and, taking into account the
$L^\infty$ bound for $\rho_i^{(\tau)}$, the family $(\rho_i^{(\tau)})$
is bounded in $L^2(0,T;H^1(\Omega))$. By Lemma \ref{lem.theta} and
Hypothesis (H4), $(\na(\theta^{(\tau)})^2)$ is bounded in $L^2(\Omega_T)$.
Therefore, since $((\theta^{(\tau)})^2)$ is bounded in $L^1(\Omega_T)$,
the Poincar\'e--Wirtinger inequality gives a uniform bound for $(\theta^{(\tau)})^2$
in $L^2(0,T;H^1(\Omega))$. These bounds yields higher integrability of 
$\theta^{(\tau)}$, as shown in the following lemma.}

\begin{lemma}\label{lem.theta3}
There exists $C>0$ independent of $\eps$ and $\tau$ such that
$(\theta^{(\tau)})$ is bounded in $L^{16/3}(\Omega_T)$.
\end{lemma}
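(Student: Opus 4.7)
The plan is to apply a standard Gagliardo--Nirenberg interpolation to the square $u^{(\tau)} := (\theta^{(\tau)})^2$, which by the bounds established just before the lemma sits in $L^\infty(0,T;L^1(\Omega))\cap L^2(0,T;H^1(\Omega))$, and then to translate the resulting $L^{8/3}(\Omega_T)$ bound back to $\theta^{(\tau)}$.

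More precisely, first I would record that the previous paragraph gives, uniformly in $\eps$ and $\tau$,
\[
  \|u^{(\tau)}\|_{L^\infty(0,T;L^1(\Omega))} + \|u^{(\tau)}\|_{L^2(0,T;H^1(\Omega))} \le C,
\]
where the $L^\infty L^1$ bound comes from Lemma \ref{lem.theta} (the estimate for $\rho^0\theta^2$) and the $L^2 H^1$ bound follows from that same lemma together with $\kappa(\theta)\ge c_\kappa\theta^2$ in Hypothesis (H4), which controls $\na(\theta^{(\tau)})^2 = 2\theta^{(\tau)}\na\theta^{(\tau)}$ in $L^2(\Omega_T)$; the Poincar\'e--Wirtinger inequality combined with the $L^1$ bound then yields the full $H^1$ bound.

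Next I would interpolate in space. Since $\Omega\subset\R^3$, we have $H^1(\Omega)\hookrightarrow L^6(\Omega)$, so by the Gagliardo--Nirenberg inequality
\[
  \|u^{(\tau)}(\cdot,t)\|_{L^{8/3}(\Omega)}
  \le C\|u^{(\tau)}(\cdot,t)\|_{H^1(\Omega)}^{3/4}\|u^{(\tau)}(\cdot,t)\|_{L^1(\Omega)}^{1/4},
\]
the exponent $3/4$ being fixed by the scaling identity $1/(8/3)=(1/4)\cdot 1+(3/4)\cdot(1/6)$. Raising to the $8/3$ power and integrating in time,
\[
  \int_0^T\|u^{(\tau)}(\cdot,t)\|_{L^{8/3}(\Omega)}^{8/3}\,dt
  \le C\|u^{(\tau)}\|_{L^\infty(0,T;L^1(\Omega))}^{2/3}\int_0^T\|u^{(\tau)}(\cdot,t)\|_{H^1(\Omega)}^{2}\,dt,
\]
so $u^{(\tau)}$ is bounded in $L^{8/3}(\Omega_T)$ uniformly in $\eps,\tau$. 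Since $u^{(\tau)} = (\theta^{(\tau)})^2$, this is exactly an $L^{16/3}(\Omega_T)$ bound on $\theta^{(\tau)}$, proving the lemma.

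There is no real obstacle here; the only mildly delicate point is making sure that the two ingredients cited from the previous paragraph (in particular the $L^2(0,T;H^1(\Omega))$ control of $(\theta^{(\tau)})^2$, which requires both Lemma \ref{lem.theta} and the lower bound $\kappa(\theta)\ge c_\kappa\theta^2$) are invoked in the right form, and that the Gagliardo--Nirenberg exponent $3/4$ is chosen so that the time integrability exponent becomes exactly $2$ on the $H^1$-factor, yielding $p=8/3$ on $u^{(\tau)}$ and hence $16/3$ on $\theta^{(\tau)}$.
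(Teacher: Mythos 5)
Your proposal is correct and is essentially the paper's own argument: both rest on the $L^\infty(0,T;L^1(\Omega))$ and $L^2(0,T;H^1(\Omega))$ bounds for $(\theta^{(\tau)})^2$ established just before the lemma, and both interpolate between them. The only cosmetic difference is that the paper first converts these into bounds for $\theta^{(\tau)}$ in $L^\infty(0,T;L^2(\Omega))$ and $L^4(0,T;L^{12}(\Omega))$ and then uses Lebesgue interpolation with $\alpha=3/4$, $r=16/3$, whereas you package the same computation as a single Gagliardo--Nirenberg step for $u^{(\tau)}=(\theta^{(\tau)})^2$ yielding $L^{8/3}(\Omega_T)$; the exponents agree.
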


\begin{proof}
We deduce from the bound for $(\theta^{(\tau)})^2$ in $L^2(0,T;H^1(\Omega))
\subset L^2(0,T;L^6(\Omega))$ that $(\theta^{(\tau)})$ is bounded in 
$L^4(0,T;L^{12}(\Omega))$. By interpolation
with $1/r=\alpha/12 + (1-\alpha)/2$ and $r\alpha=4$,
\begin{align*}
  \|\theta^{(\tau)}\|_{L^r(\Omega_T)}^r
	&= \int_0^T\|\theta^{(\tau)}\|_{L^r(\Omega)}^r dt
	\le \int_0^T\|\theta^{(\tau)}\|_{L^{12}(\Omega)}^{r\alpha}
	\|\theta^{(\tau)}\|_{L^2(\Omega))}^{r(1-\alpha)}dt \\
	&\le \|\theta^{(\tau)}\|_{L^\infty(0,T;L^2(\Omega))}^{r(1-\alpha)}\int_0^T
	\|\theta^{(\tau)}\|_{L^{12}(\Omega)}^4 dt \le C.
\end{align*}
The solution of $1/r=\alpha/12 + (1-\alpha)/2$ and $r\alpha=4$ is 
$\alpha=3/4$ and $r=16/3$.
\end{proof}

\begin{lemma}
There exists $C>0$ independent of $\eps$ and $\tau$ such that
\begin{equation}\label{est.time}
  \tau^{-1}\|\rho_i^{(\tau)}-\sigma_\tau\rho_i^{(\tau)}\|_{L^2(0,T;H^2(\Omega)')}
	+ \tau^{-1}\|\theta^{(\tau)}-\sigma_\tau\theta^{(\tau)}
	\|_{L^{16/15}(0,T;W^{1,16}(\Omega)')} \le C.
\end{equation}
\end{lemma}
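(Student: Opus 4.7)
The plan is to rearrange the discrete weak formulations \eqref{2.app3} and \eqref{2.app4} so that the difference quotients appear isolated on one side, then bound the resulting dual pairings against arbitrary test functions using the a priori bounds already established. The natural test-function space is $L^2(0,T;H^2(\Omega))$ for the mass balance (yielding the $L^2(H^2(\Omega)')$ norm) and $L^{16}(0,T;W^{1,16}(\Omega))$ for the energy balance. Noting $E^{(\tau)}=\rho^0\theta^{(\tau)}$ with $\rho^0\in L^\infty$, estimating $\tau^{-1}(E^{(\tau)}-\sigma_\tau E^{(\tau)})$ is equivalent to the corresponding estimate for $\theta^{(\tau)}$.

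For the mass balance, every term on the right-hand side of \eqref{2.app3} admits a straightforward uniform bound when paired with $\phi_i\in L^2(0,T;H^2(\Omega))$: the diffusion integral uses $\|M_{ij}\|_{L^\infty}<\infty$ and $\na v_j^{(\tau)}\in L^2(\Omega_T)$; the Soret contribution is rewritten as $M_i\na(1/\theta^{(\tau)})=-(M_i/\theta^{(\tau)})\na\log\theta^{(\tau)}$ and treated via (H3) together with \eqref{3.nath0}; the higher-order regularizer uses $\eps^{1/2}\|v_i^{(\tau)}\|_{L^2(H^2)}\le C$ (the prefactor $\eps\le\eps^{1/2}$ absorbs itself); and $r_i\in L^\infty$ by (H5). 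This delivers the first half of \eqref{est.time}.

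For the energy balance, the decisive term is the heat flux. By (H4), $|\kappa(\theta^{(\tau)})\na\theta^{(\tau)}|\le C(1+(\theta^{(\tau)})^2)|\na\theta^{(\tau)}|$, and the exponent $16/15$ is dictated by the H\"older identity $\tfrac{15}{16}=\tfrac{1}{2}+\tfrac{7}{16}$: since $\na\theta^{(\tau)}\in L^2(\Omega_T)$ by \eqref{3.nath}, and $(\theta^{(\tau)})^2\in L^{16/7}(\Omega_T)$ follows from $\theta^{(\tau)}\in L^{16/3}(\Omega_T)$ (Lemma \ref{lem.theta3}), we obtain $\kappa(\theta^{(\tau)})\na\theta^{(\tau)}\in L^{16/15}(\Omega_T)$, dual to $\na\phi_0\in L^{16}(\Omega_T)$. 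The Dufour-type term $\sum M_j\na v_j^{(\tau)}$ lies in $L^2(\Omega_T)\subset L^{16/15}(\Omega_T)$; the boundary integral is controlled by the trace embedding together with the $L^2(H^1)$ bound on $\theta^{(\tau)}$; the cubic regularizer $\eps\theta^{(\tau)}|\na\log\theta^{(\tau)}|^2\na\log\theta^{(\tau)}$ is uniformly in $L^{16/15}(\Omega_T)$ by combining $\eps^{3/4}|\na\log\theta^{(\tau)}|^3\in L^{4/3}(\Omega_T)$ (from $\eps^{1/4}\|\log\theta^{(\tau)}\|_{L^4(W^{1,4})}\le C$) with $\theta^{(\tau)}\in L^{16/3}(\Omega_T)$ via $\tfrac{15}{16}=\tfrac{3}{16}+\tfrac{3}{4}$; and the lower-order regularization is trivially bounded by the $L^2$ and $L^{16/3}$ estimates on $\log\theta^{(\tau)}$ and $\theta^{(\tau)}$.

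The expected main obstacle is the Hessian regularizer $\eps\theta^{(\tau)}D^2\log\theta^{(\tau)}:D^2\phi_0$, which involves second derivatives of the test function and so cannot be paired directly with $\phi_0\in W^{1,16}(\Omega)$. I plan to handle it through a single integration by parts, using the algebraic identity $\theta D^2\log\theta = D^2\theta-\theta^{-1}\na\theta\otimes\na\theta$ to transfer one derivative onto $\theta^{(\tau)}$ and to rewrite the contribution in divergence form, after which only $\na\phi_0$ appears. Combining the resulting expression with the $\eps^{1/2}$-bound on $\log\theta^{(\tau)}$ in $L^2(H^2)$, the $L^2$-bound on $\na\theta^{(\tau)}$, and the $L^{16/3}$-bound on $\theta^{(\tau)}$ produces a $C\|\na\phi_0\|_{L^{16}(\Omega_T)}$ estimate, uniformly in $\eps$ and $\tau$. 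Assembling all contributions then yields \eqref{est.time}.
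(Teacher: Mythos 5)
Most of your argument coincides with the paper's: the same test spaces, the same H\"older bookkeeping for the heat flux (your splitting $\tfrac{15}{16}=\tfrac12+\tfrac{7}{16}$ versus the paper's $\theta^2\na\theta=\tfrac\theta2\na(\theta^2)$ paired as $L^{8/3}\cdot L^2\cdot L^8$ are interchangeable), and the same treatment of the cubic regularizer and the reaction and boundary terms. One small imprecision: Hypothesis (H3) bounds $M_j/\theta$, not $M_j$, so the Dufour term is \emph{not} known to lie in $L^2(\Omega_T)$; you must write $M_j\na v_j=(M_j/\theta)\,\theta\,\na v_j$ and use $\theta^{(\tau)}\in L^{16/3}(\Omega_T)$, which places it in $L^{16/11}(\Omega_T)\subset L^{16/15}(\Omega_T)$ — the conclusion survives, but not by the route you state.

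The genuine gap is your treatment of the Hessian regularizer $\eps\int_{\Omega_T}\theta^{(\tau)}D^2\log\theta^{(\tau)}:D^2\phi_0\,dx\,dt$. The integration by parts you propose cannot produce an expression containing only $\na\phi_0$: writing $\theta D^2\log\theta=D^2\theta-\theta^{-1}\na\theta\otimes\na\theta$ and moving one derivative off $D^2\phi_0$ in the first summand yields either $\na\Delta\theta\cdot\na\phi_0$ (a third derivative of $\theta$, not controlled — the a priori bounds give only $\eps^{1/2}\|\log\theta^{(\tau)}\|_{L^2(0,T;H^2(\Omega))}\le C$) or a third derivative of $\phi_0$; in neither case do you land in $W^{1,16}(\Omega)'$. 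The way out — which is what the paper effectively does, and which explains why its subsequent convergence statement is phrased in $W^{2,16}(\Omega)'$ — is to keep $D^2\phi_0$ and estimate directly, $\eps\|\theta^{(\tau)}\|_{L^3(\Omega_T)}\|\log\theta^{(\tau)}\|_{L^2(0,T;H^2(\Omega))}\|D^2\phi_0\|_{L^6(\Omega_T)}\le C\eps^{1/2}\|\phi_0\|_{L^{16}(0,T;W^{2,16}(\Omega))}$, i.e.\ to accept the weaker dual norm $W^{2,16}(\Omega)'$ for the difference quotient of $\theta^{(\tau)}$ at the approximate level. This weaker bound is entirely sufficient for the Aubin--Lions argument in Step 5 and disappears in the limit $\eps\to0$, so the final regularity \eqref{1.theta} is unaffected; but as written, your proof of the stated estimate in $W^{1,16}(\Omega)'$ does not close.
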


\begin{proof}
Let $\phi_0 \in L^{16}(0,T;W^{1,16}(\Omega))$, $\phi_1,\ldots,\phi_{n-1}\in 
L^2(0,T;H^2(\Omega))$ and set $M_i^{(\tau)}=M_i(\bm\rho^{(\tau)},$ $\theta^{(\tau)})$,
$r_i^{(\tau)}=r_i(\bm\rho^{(\tau)},\theta^{(\tau)})$ for $i=1,\ldots,n-1$.
It follows from \eqref{2.app3}--\eqref{2.app4} and Hypotheses (H3)--(H5) that
\begin{align*}
	&\frac{1}{\tau}\bigg|\int_0^T\int_\Omega(\rho_i^{(\tau)}-\sigma_\tau\rho_i^{(\tau)})
	\phi_i dx dt\bigg|
	\le C\|\na\bm{v}^{(\tau)}\|_{L^2(\Omega_T)}\|\na\bm{\phi}\|_{L^2(\Omega_T)} \\
	&\phantom{xx}{}+ \sum_{i=1}^{n-1}\|M_i^{(\tau)}/\theta^{(\tau)}\|_{L^\infty(\Omega_T)}
	\|\na\log\theta^{(\tau)}\|_{L^2(\Omega_T)}\|\na\bm\phi\|_{L^2(\Omega_T)} \\
	&\phantom{xx}{}
	+ \eps\|\bm{v}^{(\tau)}\|_{L^2(0,T;H^2(\Omega))}
	\|\bm\phi\|_{L^2(0,T;H^2(\Omega))}
	+ \|\bm{r}^{(\tau)}\|_{L^2(\Omega_T)}\|\bm\phi\|_{L^2(\Omega_T)} \\
	&\le C\|\bm\phi\|_{L^2(0,T;H^2(\Omega))},
\end{align*}
and
\begin{align*}
	&\frac{1}{\tau}\bigg|\int_0^T\int_\Omega(E^{(\tau)}-\sigma_\tau E^{(\tau)})
	\phi_0 dxdt\bigg| \\
	&\le C + C\|\theta^{(\tau)}\|_{L^{8/3}(\Omega_T)}
	\|\na(\theta^{(\tau)})^2\|_{L^2(\Omega_T)}\|\na\phi_0\|_{L^8(\Omega_T)} \\
	&\phantom{xx}{}+ \sum_{j=1}^{n-1}\|M_j^{(\tau)}/\theta^{(\tau)}\|_{L^\infty(\Omega_T)}
	\|\theta^{(\tau)}\|_{L^{8/3}(\Omega_T)}
	\|\na v_j^{(\tau)}\|_{L^2(\Omega_T)}\|\na\phi_0\|_{L^8(\Omega_T)} \\
	&\phantom{xx}{}+ \lambda\|\theta_0-\theta^{(\tau)}\|_{L^{8/7}(0,T;L^{8/7}(\pa\Omega))}
	\|\phi_0\|_{L^8(0,T;L^8(\pa\Omega))} \\
	&\phantom{xx}{}
	+ \eps \| \theta^{(\tau)} \|_{L^3(\Omega_T)} \|\log\theta^{(\tau)}
	\|_{L^2(0,T;H^2(\Omega))}\| \nabla \phi_0\|_{L^6(\Omega_T)} \\
	&\phantom{xx}{} + \eps
	\|\theta^{(\tau)}\|_{L^{16/3}(\Omega_T)}\|\na\log\theta^{(\tau)}\|_{L^4(\Omega_T)}^3
	\|\na\phi_0\|_{L^{16}(\Omega_T)} \\
	&\phantom{xx}{}
	+ \eps C\big(1+\|\theta^{(\tau)}\log\theta^{(\tau)}\|_{L^2(\Omega_T)}\big)
	\|\phi_0\|_{L^2(\Omega_T)}
	\le C\|\phi_0\|_{L^{16}(0,T;W^{1,16}(\Omega))}.
\end{align*}
Since $|E^{(\tau)}-\sigma_\tau E^{(\tau)}|=\rho^0|\theta^{(\tau)}
-\sigma_\tau\theta^{(\tau)}|
\ge\rho_*|\theta^{(\tau)}-\sigma_\tau\theta^{(\tau)}|$, this concludes the proof.
\end{proof}

{\em Step 5: limit $(\eps,\tau)\to 0$.}
Estimates \eqref{est.rho}--\eqref{est.time} allow us to
apply the Aubin--Lions lemma in the version of \cite{DrJu12}. Thus, there
exist subsequences that are not relabeled such that as $(\eps,\tau)\to 0$,
\begin{equation}\label{strong}
  \rho_i^{(\tau)}\to\rho_i, \quad \theta^{(\tau)}\to\theta
	\quad\mbox{strongly in }L^2(\Omega_T),\ i=1,\ldots,n-1.
\end{equation}
The $L^\infty(\Omega_T)$ bound for $(\rho_i^{(\tau)})$ and the $L^{16/3}(\Omega_T)$
bound for $(\theta^{(\tau)})$ imply the stronger convergences
\begin{align*}
  \rho_i^{(\tau)}\to\rho_i &\quad\mbox{strongly in }L^r(\Omega_T)
	\mbox{ for all }r<\infty, \\
	\theta^{(\tau)}\to\theta &\quad\mbox{strongly in }L^\eta(\Omega_T)
	\mbox{ for all }\eta < 16/3.
\end{align*}
The uniform bounds also imply that, up to subsequences, 
\begin{align*}
  \rho_i^{(\tau)}\rightharpoonup \rho_i 
	&\quad\mbox{weakly in }L^2(0,T;H^1(\Omega)),\\
  \theta^{(\tau)}\rightharpoonup \theta 
	&\quad\mbox{weakly in }L^2(0,T;H^1(\Omega)),\\
	\nabla v_i^{(\tau)}\rightharpoonup \nabla v_i
	&\quad\mbox{weakly in }L^2(0,T;L^2(\Omega)),\\
	\tau^{-1}(\rho_i^{(\tau)}-\sigma_\tau\rho_i^{(\tau)})\rightharpoonup \pa_t\rho_i
	&\quad\mbox{weakly in }L^2(0,T;H^2(\Omega)'), \\
	\tau^{-1}(\theta^{(\tau)}-\sigma_\tau\theta^{(\tau)})\rightharpoonup \pa_t \theta
	&\quad\mbox{weakly in }L^{16/15}(0,T;W^{2,16}(\Omega)'),
\end{align*}
where $i = 1,\ldots,n-1$ and $j = 1,\ldots,n$.
Moreover, as $(\eps,\tau)\to 0$,
$$
  \eps \log\theta^{(\tau)} \to 0, \quad \eps v_i^{(\tau)}\to 0 \quad\mbox{strongly in }
	L^2(0,T;H^2(\Omega)).
$$
\red{At this point, $v_i$ is any limit function; we prove below that 
$v_i=\log(\rho_i/\rho_n)$.}

We deduce from the linearity and boundedness of the trace operator 
$H^1(\Omega) \hookrightarrow H^{1/2}(\pa\Omega)$ that
$$
	\theta^{(\tau)} \rightharpoonup \theta \quad\mbox{weakly in }
	L^2(0,T;H^{1/2}(\pa \Omega)).
$$
Using the compact embedding $H^{1/2}(\pa \Omega) \hookrightarrow 
L^2(\pa \Omega)$, this gives 
$$
	\theta^{(\tau)} \to \theta \quad\mbox{strongly in } 
	L^2(0,T;L^2(\pa \Omega)).
$$
\red{The a.e.\ convergence of $\rho_i$ for $i = 1,\ldots,n-1$ implies that, up to
a subsequence,
\begin{align*}
	\rho_n^{(\tau)} = \rho^0 - \sum_{i=1}^{n-1} \rho_i^{(\tau)}
	\rightarrow \rho^0 - \sum_{i=1}^{n-1} \rho_i =: \rho_n
	\quad\mbox{a.e. in }\Omega_T.
\end{align*}}

\red{Next, we prove that $\theta$ and $\rho_i$ are positive a.e. We know already
that $\theta^{(\tau)}$ and $\rho_i^{(\tau)}$ are positive in $\Omega_T$. 
It follows from the $L^\infty(0,T;L^1(\Omega))$ bound for $\log\theta^{(\tau)}$ 
and the a.e.\ pointwise convergence $\theta^{(\tau)}\to\theta$ that 
$\log\theta$ is finite a.e.\ and therefore $\theta>0$ a.e.\ in $\Omega_T$.
For the positivity of $\rho_i$, we observe first that there exists a constant
$C(n)>0$ such that for all $z_1,\ldots,z_{n-1}\in\R$, 
$$
  \log\bigg(1+\sum_{i=1}^{n-1}e^{z_i}\bigg) \le C(n)\bigg(1+\sum_{i=1}^{n-1}|z_i|\bigg).
$$
Since $\rho_i^{(\tau)}=\rho^0\exp(v_i^{(\tau)})/\sum_{j=1}^n\exp(v_j^{(\tau)})$, 
$\rho^0\ge\rho_*$, and $v_i^{(\tau)}$ is bounded in $L^1(\Omega)$, this implies for 
sufficiently small $\delta>0$ that
\begin{align*}
  \operatorname{meas}&\big\{(x,t):\rho_i^{(\tau)}(x,t)\le\delta\big\}
	= \operatorname{meas}\bigg\{(x,t):-\log\frac{\rho^0(x)\exp(v_i^{(\tau)}(x,t))}{
	\sum_{j=1}^n \exp(v_j^{(\tau)}(x,t))}\ge-\log\delta\bigg\} \\
	&\le \operatorname{meas}\bigg\{(x,t):\sum_{j=1}^n|v_j^{(\tau)}(x,t)|
	\ge C(1 - \log\delta + \log\rho_*)\bigg\} \\
	&\le \frac{C}{-\log\delta}\int_0^T\int_\Omega\sum_{i=1}^n|v_i^{(\tau)}(x,t)|dxdt
	\le \frac{C}{-\log\delta}, \quad i=1,\ldots,n-1.
\end{align*}
We infer from
\begin{align*}
	\mathrm{meas}&\big\{\liminf_{(\eps,\tau) \to 0}
	\{ (x,t):\rho_i^{(\tau)}(x,t)\le\delta\} \big\} 
	\leq \liminf_{(\eps,\tau) \to 0}\mathrm{meas} 
	\{ (x,t):\rho_i^{(\tau)}(x,t)\le\delta\} \\
	&\le \limsup_{(\eps,\tau) \to 0}\mathrm{meas} 
	\{ (x,t):\rho_i^{(\tau)}(x,t)\le\delta\}
	\le \mathrm{meas} \big\{\limsup_{(\eps,\tau) \to 0}
	\{ (x,t):\rho_i^{(\tau)}(x,t)\le\delta\} \big\},
\end{align*}
and the pointwise convergence $\rho_i^{(\tau)}\to\rho_i$ that in fact equality
holds in the previous chain of inequalities, which means that
$$
  \operatorname{meas}\{(x,t):\rho_i(x,t)\le\delta\}
	= \lim_{(\eps,\tau)\to 0}\operatorname{meas}\{(x,t):\rho_i^{(\tau)}(x,t)\le \delta\}
	\le \frac{C}{-\log\delta}
$$
and $\rho_i>0$ a.e.\ in the limit $\delta\to 0$, where $i=1,\ldots,n-1$.
We prove in a similar way for $\rho_n^{(\tau)}
=\rho^0/(\sum_{j=1}^n\exp(v_i^{(\tau)}))>0$ that $\rho_n>0$ a.e.}

\red{As $\rho_i^{(\tau)}$ converges a.e.\ to an a.e.\ positive limit, we have
$$
  v_i^{(\tau)} = \log\rho_i^{(\tau)}-\log\rho_n^{(\tau)}
	\to \log\rho_i-\log\rho_n \quad\mbox{a.e. in }\Omega_T.
$$
Thus $v_i=\log\rho_i-\log\rho_n$. Furthermore, $q_i^{(\tau)}=\log\rho_i^{(\tau)}
- \log\theta^{(\tau)} \to \log\rho_i - \log\theta =: q_i$ and
$$
  (\Pi\bm{q}^{(\tau)})_i = v_i^{(\tau)} - \frac{1}{n}\sum_{j=1}^{n}v_j^{(\tau)}
	\to v_i - \frac{1}{n}\sum_{j=1}^{n}v_j =: U_i \quad\mbox{a.e. in }\Omega_T.
$$
This shows that $v_i=q_i-q_n$ and $U_i=(q_i-q_n)-\sum_{j=1}^{n}(q_j-q_n)/n
=(\Pi\bm{q})_i$. The a.e.\ convergence of
$(\Pi\bm{q}^{(\tau)})$ and the boundedness of $r_i$ by Hypothesis (H5) lead to
$$
  r_i(\Pi\bm{q}^{(\tau)},\theta^{(\tau)})\to r_i(\Pi\bm{q},\theta)
	\quad\mbox{strongly in }L^\eta(\Omega_T),\ \eta<\infty.
$$}

By assumption, $M_{ij}(\bm\rho^{(\tau)},\theta^{(\tau)})$ and
$M_j(\bm\rho^{(\tau)},\theta^{(\tau)})/\theta^{(\tau)}$ are bounded.
Then the strong convergences imply that these sequences are converging
in $L^q(\Omega_T)$ for $q<\infty$, and the limits can be identified. Thus,
\begin{align*}
  M_{ij}(\bm\rho^{(\tau)},\theta^{(\tau)})\to M_{ij}(\bm\rho,\theta)
	&\quad\mbox{strongly in }L^q(\Omega_T), \\
  M_j(\bm{\rho}^{(\tau)},\theta^{(\tau)})/\theta^{(\tau)}\to
	M_j(\bm\rho,\theta)/\theta
	&\quad\mbox{strongly in }L^q(\Omega_T)\mbox{ for all }q<\infty.
\end{align*}
This shows that
$$
  M_j(\bm{\rho}^{(\tau)},\theta^{(\tau)})
	= \frac{1}{\theta^{(\tau)}}M_j(\bm{\rho}^{(\tau)},\theta^{(\tau)})
	\theta^{(\tau)}
	\to \frac{1}{\theta}M_j(\bm\rho,\theta)\theta = M_j(\bm\rho,\theta)
$$
strongly in $L^\eta(\Omega_T)$ for $\eta<16/3$. Moreover, taking into account
\eqref{3.nath}, we have
$$
	M_j(\bm{\rho}^{(\tau)},\theta^{(\tau)})\na\frac{1}{\theta^{(\tau)}}
	= -\frac{M_j(\bm{\rho}^{(\tau)},\theta^{(\tau)})}{\theta^{(\tau)}}
	\na\log\theta^{(\tau)}
	\rightharpoonup \frac{M_j(\bm\rho,\theta)}{\theta}\na\log\theta
$$
weakly in $L^\eta(\Omega_T)$ for $\eta<8/3$. 
Finally, by the weak convergence of $(\na\bm{v}^{(\tau)})$ in $L^2(\Omega_T)$, 
\begin{align*}
  M_{ij}(\bm{\rho}^{(\tau)},\theta^{(\tau)})\na v_j^{(\tau)}
	\rightharpoonup M_{ij}(\bm\rho,\theta)\na v_j
	&\quad\mbox{weakly in }L^\eta(\Omega_T),\ \eta < 2, \\
  M_j(\bm{\rho}^{(\tau)},\theta^{(\tau)})\na v_j^{(\tau)}
	\rightharpoonup M_j(\bm\rho,\theta)\na v_j
	&\quad\mbox{weakly in }L^\eta(\Omega_T),\ \eta < 16/11, \\
	M_j(\bm{\rho}^{(\tau)},\theta^{(\tau)})\na\frac{1}{\theta^{(\tau)}}
	\rightharpoonup -\frac{1}{\theta^2}M_j(\bm\rho,\theta)\na\theta
	&\quad\mbox{weakly in }L^\eta(\Omega_T),\ \eta < 8/7.
\end{align*}

These convergences allow us to perform the limit $(\eps,\tau)\to 0$.
Finally, we can show as in \cite[p.~1980f]{Jue15} that the linear interpolant 
$\widetilde\rho_i^{(\tau)}$ of $\rho_i^{(\tau)}$ and the piecewise constant function 
$\rho_i^{(\tau)}$ converge
to the same limit, which leads to $\rho_i^0=\widetilde\rho_i^{(\tau)}(0)
\rightharpoonup\rho_i(0)$ weakly in $H^2(\Omega)'$. Thus, the initial
datum $\rho_i(0)=\rho_i^0$ is satisfied in the sense of $H^2(\Omega)'$.
Similarly, $(\rho\theta)(0)=\rho^0\theta^0$ in the sense of $W^{1,16}(\Omega)'$.
This finishes the proof.


\section{Proof of Theorem \ref{thm.ex2}}\label{sec.degen}

\red{The proof of Theorem \ref{thm.ex2} is very similar to that one from Section
\ref{sec.ex}, therefore we present only the changes in the proof. Steps 1--3 are the
same as in the previous section. Only the estimate of $I_4$ is different:
$$
  I_4 = \int_\Omega\sum_{i,j=1}^{n-1}M_{ij}\na v_i\cdot\na v_j dx
	= \int_\Omega\sum_{i,j=1}^nM_{ij}\na q_i\cdot\na q_j dx
  \ge \frac{c_M}{n}\int_\Omega\sum_{i=1}^n\rho_i|\na(\Pi\bm{q})_i|^2 dx.
$$
This gives a uniform estimate for 
$\int_\Omega\rho_i^{(\tau)}|\na(\Pi\bm{q}^{(\tau)})_i|^2 dx$. We claim that
it yields a bound for $\na(\rho_i^{(\tau)})^{1/2}$ in $L^2(\Omega_T)$. Indeed,
we insert the definitions $q_i^{(\tau)}=\log(\rho_i^{(\tau)}/\theta^{(\tau)})$ 
and $(\Pi\bm{q}^{(\tau)})_i = q_i^{(\tau)}-\sum_{j=1}^n q_j^{(\tau)}/n
= \log\rho_i^{(\tau)} - \sum_{j=1}^n(\log\rho_j^{(\tau)})/n$ to find that
\begin{align*}
  \sum_{i=1}^n\rho_i|\na(\Pi\bm{q}^{(\tau)})_i|^2 
	&= \sum_{i=1}^n\rho_i^{(\tau)}\bigg|\na\log\rho_i^{(\tau)}
	- \frac{1}{n}\sum_{j=1}^n\na\log\rho_j^{(\tau)}\bigg|^2 \\
	&= \sum_{i=1}^n\rho_i^{(\tau)}|\na\log\rho_i^{(\tau)}|^2 
	- \frac{2}{n}\na\rho^0\cdot\sum_{j=1}^n\na\log\rho_j^{(\tau)} 
	+ \frac{\rho^0}{n^2}\bigg|\sum_{j=1}^n\na\log\rho_j^{(\tau)}\bigg|^2 \\
	&\ge 4\sum_{i=1}^n|\na(\rho_i^{(\tau)})^{1/2}|^2
	- 4|\na(\rho^0)^{1/2}|^2.
\end{align*}
This shows the claim.}

\red{In contrast to Step 4 in Section \ref{sec.ex}, we do not have a uniform
bound for $v_i^{(\tau)}$ in $L^2(0,T;$ $H^1(\Omega))$ but
a bound for $(\rho_i^{(\tau)})^{1/2}$. We deduce from the $L^\infty$ bound for 
$\rho_i^{(\tau)}$ a bound for $\rho_i^{(\tau)}$ in $L^2(0,T;H^1(\Omega))$,
using $\na\rho_i^{(\tau)}=(\rho_i^{(\tau)})^{1/2}\na(\rho_i^{(\tau)})^{1/2}$.
This bound changes the proof of estimate \eqref{est.time} for the time translates.
In fact, we just have to replace the estimations involving $\na v_j^{(\tau)}$:
\begin{align*}
  \int_0^T\int_\Omega &
	\bigg|\sum_{j=1}^{n-1} M_{ij}^{(\tau)}\na v_j^{(\tau)}\cdot\na\phi_j dxdt\bigg|dxdt
	= \int_0^T\int_\Omega \bigg|\sum_{j=1}^n M_{ij}^{(\tau)}\na\log\rho_j^{(\tau)}
	\cdot\na\phi_i\bigg| dxdt \\
	&\le \sum_{j=1}^n\|M_{ij}^{(\tau)}/\rho_j^{(\tau)}
	\|_{L^\infty(\Omega_T)}\|\na\rho_j^{(\tau)}\|_{L^2(\Omega_T)}
	\|\na\phi_i\|_{L^2(\Omega_T)}, \\
  \int_0^T\int_\Omega & \bigg|\sum_{j=1}^{n-1}M_j^{(\tau)}\na v_j^{(\tau)}
	\cdot\na\phi_0\bigg| dxdt
	= \int_0^T\int_\Omega\bigg|\sum_{j=1}^n M_j^{(\tau)}\na\log\rho_j^{(\tau)}
	\cdot\na\phi_0 \bigg|dxdt \\
	&\le \sum_{j=1}^n\|M_{ij}^{(\tau)}/\rho_j^{(\tau)}
	\|_{L^\infty(\Omega_T)}\|\na\rho_j^{(\tau)}\|_{L^2(\Omega_T)}
	\|\na\phi_0\|_{L^2(\Omega_T)}.
\end{align*}
This yields \eqref{est.time}.
}

\red{The $L^2(0,T;H^1(\Omega))$ estimate for $\rho_i^{(\tau)}$ and
\eqref{est.time} allow us to apply the Aubin--Lions lemma 
in the version of \cite{DrJu12}
yielding, up to a subsequence, the strong convergence $\rho_i^{(\tau)}\to
\rho_i$ in $L^2(\Omega_T)$ as $(\eps,\tau)\to 0$ and, 
because of the boundedness of $\rho_i^{(\tau)}$, in $L^r(\Omega_T)$ 
for any $r<\infty$.}

\red{It remains to perform the limit $(\eps,\tau)\to 0$ in the terms involving
$\bm{v}^{(\tau)}$,
$$
  \sum_{j=1}^n M_{ij}(\bm\rho^{(\tau)},\theta^{(\tau)})\na v_j^{(\tau)}, \quad
	\sum_{i=1}^n M_i(\bm\rho^{(\tau)},\theta^{(\tau)})\na v_i^{(\tau)}, \quad
	\eps \theta^{(\tau)}(D^2 v_j^{(\tau)} + v_j^{(\tau)}).
$$
The last term is easy to treat: The bound for $\sqrt{\eps}v_j^{(\tau)}$ 
in $L^2(0,T;H^2(\Omega))$ and the strong convergence of $\theta^{(\tau)}$
imply that $\eps \theta^{(\tau)}(D^2 v_j^{(\tau)} + v_j^{(\tau)})\to 0$
strongly in $L^2(\Omega_T)$.
Since $M_{ij}/\rho_j^{(\tau)}$ is bounded by assumption, we have
$M_{ij}(\bm\rho^{(\tau)},\theta^{(\tau)})/\rho_j^{(\tau)}$ $\to
M_{ij}(\bm\rho,\theta)/\rho_j$ strongly in $L^r(\Omega_T)$ for $r<\infty$.
Hence, using \eqref{1.M1} and the weak convergence of $(\na\rho_j^{(\tau)})$
in $L^2(\Omega_T)$,
$$
  \sum_{j=1}^{n-1} M_{ij}(\bm\rho^{(\tau)},\theta^{(\tau)})\na v_j^{(\tau)}
	= \sum_{j=1}^n \frac{M_{ij}(\bm\rho^{(\tau)},\theta^{(\tau)})}{\rho_j^{(\tau)}}
	\na\rho_j^{(\tau)} \rightharpoonup 
	\sum_{j=1}^n \frac{M_{ij}(\bm\rho,\theta)}{\rho_j}\na\rho_j
$$
weakly in $L^\eta(\Omega_T)$ for $\eta<2$.
Since $(M_{ij}/\rho_j^{(\tau)})\na\rho_j^{(\tau)}$ is bounded in $L^2(\Omega_T)$,
this convergence also holds in $L^2(\Omega_T)$. 
The limit in the second term
$\sum_{i=1}^n M_i(\bm\rho^{(\tau)},\theta^{(\tau)})\na v_i^{(\tau)}$ is performed
in an analogous way, leading to
$$
  \sum_{i=1}^n M_i(\bm\rho^{(\tau)},\theta^{(\tau)})\na v_i^{(\tau)}
	= \sum_{i=1}^n \frac{M_{i}(\bm\rho^{(\tau)},\theta^{(\tau)})}{\rho_i^{(\tau)}}
	\na\rho_i^{(\tau)}
	\rightharpoonup \sum_{i=1}^n \frac{M_{i}(\bm\rho,\theta)}{\rho_i}\na\rho_i
$$
weakly in $L^2(\Omega_T)$. This finishes the proof.}


\end{document}